\titleformat*{\section}{\Large\bfseries}
\titleformat*{\subsection}{\large\bfseries}
\titleformat*{\subsubsection}{\large\bfseries}
\titleformat*{\paragraph}{\large\bfseries}
\titleformat*{\subparagraph}{\large\bfseries}
\newtheorem{teo}{Theorem}[section]
\newtheorem{lema}[teo]{Lemma}
\newtheorem{prop}[teo]{Proposition}
\newtheorem{defi}[teo]{Definition}
\newtheoremstyle{mytheoremstyle} 
{\topsep}                    
{\topsep}                    
{}                   
{}                           
{\scshape}                   
{.}                          
{.5em}                       
{}  
\theoremstyle{mytheoremstyle} \newtheorem{nota}{Remark}[section]
\theoremstyle{mytheoremstyle} 
\theoremstyle{mytheoremstyle} 
\numberwithin{equation}{section}
\newcommand{\real}{\mathbb{R}}
\newcommand{\nat}{\mathbb{N}}
\newcommand \ben {\begin{equation}}
\newcommand \een {\end{equation}}
\newcommand \be {\begin{equation*}}
\newcommand \ee {\end{equation*}}
\newcommand \bi {\begin{itemize}}
\newcommand \ei {\end{itemize}}
\DeclareMathOperator*{\parteim}{Im}
\title{\textbf{Local Cauchy theory for the nonlinear Schrödinger equation in spaces of infinite mass}}
\author{Simão Correia}
\begin{document}

\maketitle

\begin{abstract}
We consider the Cauchy problem for the nonlinear Schrödinger equation on $\real^d$, where the initial data is in $\dot{H}^1(\real^d)\cap L^p(\real^d)$. We prove local well-posedness for large ranges of $p$ and discuss some global well-posedness results.
\vskip10pt
\noindent\textbf{Keywords}: nonlinear Schrödinger equation; local well-posedness; global well-posedness,
\vskip10pt
\noindent\textbf{AMS Subject Classification 2010}: 35Q55, 35A01.
\end{abstract}
\section{Introduction}
In this work, we consider the classical nonlinear Schrödinger equation over $\real^d$:
\begin{equation}\tag{NLS}
iu_t + \Delta u + \lambda |u|^\sigma u = 0, \quad u=u(t,x),\quad (t,x)\in \real\times\real^d, \ \lambda\in \real, \ 0<\sigma<4/(d-2)^+
\end{equation}
and focus on the corresponding Cauchy problem $u(0)=u_0\in E$, where $E$ is a suitable function space. This model equation is the subject of more than fifty years of intensive research, which makes us unable to give a complete list of important references (we simply refer the monographs \cite{cazenave}, \cite{sulem}, \cite{tao} and references therein). The usual framework one considers is $E=H^1(\real^d)$, the so-called energy space, or more generally, $E=H^s(\real^d)$. A common property of these spaces is that they are $L^2$-based. The reason for this constraint comes from the fact that the linear group is bounded in $L^2$, but not in any other $L^p$.

In the sense of lifting the $L^2$ constraint, we refer the papers \cite{gallo}, \cite{gerard} and \cite{ms2}. In the first paper, one considers local well-posedness on Zhidkov spaces
$$
E=X^k(\real^d)=\{u\in L^\infty(\real^d): \nabla u\in H^{k-1}(\real^d) \}.
$$
In the second, one takes the Gross-Pitaevskii equation and looks for local well-posedness on
$$
E=\{u\in H^1_{loc}(\real^d): \nabla u\in L^2(\real^d), |u|^2-1\in L^2(\real^d)\}.
$$
Finally, in the third work, one considers $E=H^1(\real^2)+X$, where $X$ is either a particular space of bounded functions with no decay or a subspace of $L^4(\real^2)$ (and not of $L^2(\real^2)$). 

The aim of this paper is to look for local well-posedness results over another class of spaces, namely
$$
E=X_p(\real^d)=\dot{H}^1(\real^d) \cap L^p(\real^d), \quad 2<p<2d/(d-2)^+.
$$
In particular, we obtain local well-posedness in the most general energy space $X_{\sigma+2}(\real^d)$ and obtain global well-posedness over $X_p(\real^d)$ in the defocusing case $\lambda<0$ for all $p\le \sigma+2$.

\begin{nota}
Our results can be extended to more general nonlinearities $f(u)$ as in the $H^1$ framework. We present our results for $f(u)=|u|^{\sigma}u$ so not to complicate unnecessarily the proofs and deviate from the main ideas.
\end{nota}

We briefly explain the structure of this work: in Section 2, we derive the required group estimates and show that the Schrödinger group is well-defined over $X_p(\real^d)$. In Section 3, we show local well-posedness for $p\le 2\sigma+2$, where the use of Strichartz estimates is available. We also prove global well-posedness for small $\sigma$ (cf. Proposition \ref{prop:globalsubcritico}). In Section 4, we deal with the complementary case $p>2\sigma+2$ in dimensions $d=1,2$.
\vskip15pt
\noindent \textbf{Notation.} The norm over $L^p(\real^d)$ will be denoted as $\|\cdot\|_p$ or $\|\cdot\|_{L^p}$, whichever is more convenient. The spatial domain $\real^d$ will often be ommited. The free Schrödinger group in $H^1(\real^d)$ is written as $\{S(t)\}_{t\in \real}$. We write $p^*=dp/(d-p)^+$. To avoid repetition, we hereby set $2<p<2^*$ and $0<\sigma<4/(d-2)^+$.

\section{Linear estimates}
We recall the essential Strichartz estimates. We say that $(q,r)$ is an admissible pair if
$$
2\le r \le 2^*,\quad \frac{2}{q}=d\left(\frac{1}{2}-\frac{1}{r}\right), \quad r\neq \infty\mbox{ if }d=2.
$$
\begin{lema}[Strichartz estimates]
Given two admissible pairs $(q,r)$ and $(\gamma, \rho)$, we have, for all sufficiently regular $u_0$ and $f$ and for any interval $I\subset \real$,
\begin{equation}\label{eq:strichartzhomogeneo}
\|S(\cdot)u_0\|_{L^q(I; L^r(\real^N))}\lesssim \|u_0\|_2
\end{equation}
and
\begin{equation}\label{eq:strichartznaohomogeneo}
\left\| \int_{0<s<t} S(t-s)f(s) ds\right\|_{L^q(I; L^r(\real^N))}\lesssim \|f\|_{L^{\gamma'}(I; L^{\rho'}(\real^N))}.
\end{equation}
\end{lema}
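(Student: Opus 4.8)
The plan is to follow the classical Ginibre--Velo argument based on the $TT^*$ method, reducing everything to the pointwise dispersive decay of the free group. First I would establish the two fundamental bounds for $S(t)$: the $L^2$ isometry $\|S(t)u_0\|_2=\|u_0\|_2$, which follows from Plancherel since $S(t)$ acts as a Fourier multiplier of modulus one, and the dispersive estimate $\|S(t)u_0\|_\infty \lesssim |t|^{-d/2}\|u_0\|_1$, obtained from the explicit convolution kernel $(4\pi i t)^{-d/2}e^{i|x|^2/4t}$. Interpolating these two via Riesz--Thorin yields the decay estimate $\|S(t)u_0\|_r \lesssim |t|^{-d(1/2-1/r)}\|u_0\|_{r'}$ for every $2\le r\le\infty$.

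Next I would recast the homogeneous estimate \eqref{eq:strichartzhomogeneo} in operator form. Writing $Tu_0=S(\cdot)u_0$, the estimate reads $\|Tu_0\|_{L^q_tL^r_x}\lesssim\|u_0\|_2$, which by duality is equivalent to the boundedness of $T^*$ from $L^{q'}_tL^{r'}_x$ into $L^2_x$, and in turn to the boundedness of $TT^*$ from $L^{q'}_tL^{r'}_x$ into $L^q_tL^r_x$. A direct computation identifies $TT^*F(t)=\int_\real S(t-s)F(s)\,\ds$. Applying the decay estimate inside the integral and using the admissibility relation $2/q=d(1/2-1/r)$ turns the $t$-integral into a Riesz potential of order $1-2/q$, so that the one-dimensional Hardy--Littlewood--Sobolev inequality in the time variable closes the bound, at least when $q>2$ (equivalently $r<2^*$). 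The inhomogeneous estimate \eqref{eq:strichartznaohomogeneo} then follows by the same mechanism, combining the dual of the homogeneous estimate for the pair $(\gamma,\rho)$ with the homogeneous estimate for $(q,r)$, together with the Christ--Kiselev lemma to pass from the full convolution over $\real$ to the retarded integral over $\{0<s<t\}$.

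The main obstacle is the endpoint $q=2$, that is the pair $(2,2^*)$ in dimensions $d\ge 3$, where $1-2/q=0$ and the Hardy--Littlewood--Sobolev inequality degenerates. Here I would invoke the Keel--Tao argument: dyadically decompose the kernel in $|t-s|$, estimate each piece through a bilinear interpolation between the $L^2$ and dispersive bounds, and sum the resulting geometric-type series, checking that the off-diagonal decay is summable precisely at the endpoint exponents. Since $d=2$ excludes $r=\infty$ and $d=1$ requires no endpoint, this difficulty only arises for $d\ge3$. As this is by now a standard result, I would present the argument only in outline and refer to \cite{cazenave} and \cite{tao} for the complete details.
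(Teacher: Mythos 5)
The paper does not actually prove this lemma: it is recalled as a standard result, with \cite{cazenave} and \cite{tao} as the general references, and \cite{foschi}, \cite{vilela} cited only for extensions of the inhomogeneous estimate beyond admissible pairs. Your outline is therefore measured against the literature rather than against an argument in the text, and it is the correct standard one: $L^2$ isometry plus the dispersive kernel bound, Riesz--Thorin interpolation, the $TT^*$ reduction, and one-dimensional Hardy--Littlewood--Sobolev in the time variable (valid exactly when $q>2$, i.e.\ $r<2^*$), with the Keel--Tao bilinear argument at the endpoint $(2,2^*)$ for $d\ge 3$. One refinement worth recording: the Christ--Kiselev lemma, which you use to pass from the untruncated convolution to the retarded integral in \eqref{eq:strichartznaohomogeneo}, requires the strict inequality $\gamma'<q$; this holds whenever at least one of the two pairs is non-endpoint, but fails in the double-endpoint case $(q,r)=(\gamma,\rho)=(2,2^*)$, where the retarded estimate cannot be obtained by gluing the two homogeneous estimates and must instead be proved by running the Keel--Tao decomposition directly on the retarded operator. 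Since your endpoint paragraph already invokes Keel--Tao, this is a matter of stating precisely what that argument is invoked for rather than a missing idea; also note that in the diagonal case $(q,r)=(\gamma,\rho)$ you do not need Christ--Kiselev at all, since after taking norms the truncation $\{0<s<t\}$ only shrinks the nonnegative integrand and Hardy--Littlewood--Sobolev applies verbatim.
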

\begin{nota}
The estimate \eqref{eq:strichartznaohomogeneo} may be extended to other sets of admisssible pairs: see \cite{foschi} and \cite{vilela}. However, the linear estimate \eqref{eq:strichartzhomogeneo} is not valid for any other pairs and for $u_0\notin L^2(\real^d)$.
\end{nota}
\begin{prop}[Group estimates with loss of derivative]\label{prop:estimativasLp}
Define $k$ so that $(k,p)$ is admissible. Then
\begin{itemize}
\item (Linear estimate) For $\phi\in \mathcal{S}(\real^d)$,
\begin{equation}\label{eq:linear}
\|S(t)\phi\|_p^2\lesssim \|\phi\|_p^2 + |t|^{1-\frac{2}{k}}\|\nabla \phi\|_2^2,\quad t\in\real.
\end{equation}
\item (Non-homogeneous estimate) For $f\in C([0,T]; \mathcal{S}(\real^d))$ and any $(q,r)$ admissible, 
\begin{equation}\label{eq:naohomogenea}
\left\|\int_0^\cdot S(\cdot-s) f(s) ds\right\|_{L^\infty((0,T); L^p(\real^d))}\lesssim C(T)\left( \|f\|_{L^2((0,T); L^p(\real^d))} + \|\nabla f\|_{L^{q'}((0,T); L^{r'}(\real^d))}\right),
\end{equation}
where $C(\cdot)$ is a increasing bounded function over bounded intervals of $\real$.
\end{itemize}
\end{prop}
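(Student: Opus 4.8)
The plan is to prove both estimates by an energy-type method applied to $u(t)=S(t)\phi$, exploiting the fact that although $\phi\notin L^2$, its gradient $\nabla\phi$ does lie in $L^2$, so that the standard $L^2$-based Strichartz estimate \eqref{eq:strichartzhomogeneo} can be applied to $\nabla\phi$. This is exactly the mechanism behind the \emph{loss of derivative}: the $L^p$ norm is controlled at the cost of the $\dot H^1$ norm.

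For the linear estimate I would first differentiate $\|u(t)\|_p^p=\int|u|^p\,\dx$ in time. Since $u_t=i\Delta u$, one has $\frac{d}{dt}\|u\|_p^p = p\,\partere\int|u|^{p-2}\bar u\, i\Delta u\,\dx = -p\,\parteim\int|u|^{p-2}\bar u\,\Delta u\,\dx$; after integrating by parts and taking the imaginary part, the purely real gradient terms drop out and one is left with a contribution bounded pointwise by $|u|^{p-2}|\nabla u|^2$, so that $\left|\frac{d}{dt}\|u\|_p^p\right|\lesssim \int|u|^{p-2}|\nabla u|^2\,\dx$. A Hölder split with exponents $p/(p-2)$ and $p/2$ then gives $\int|u|^{p-2}|\nabla u|^2\,\dx \le \|u\|_p^{p-2}\|\nabla u\|_p^2$. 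Integrating in time from $0$ and setting $M=\sup_{[0,t]}\|u\|_p$, I would bound $\|u\|_p^{p-2}\le M^{p-2}$ and, crucially, control $\int_0^t\|\nabla u(s)\|_p^2\,\ds$ by Hölder in time (exponents $k/2$ and $k/(k-2)$) by $|t|^{1-2/k}\|\nabla u\|_{L^k((0,t);L^p)}^2$; since $\nabla u(s)=S(s)\nabla\phi$ with $\nabla\phi\in L^2$, \eqref{eq:strichartzhomogeneo} yields $\|\nabla u\|_{L^k((0,t);L^p)}\lesssim\|\nabla\phi\|_2$. This produces $M^p\le \|\phi\|_p^p + C\, M^{p-2}|t|^{1-2/k}\|\nabla\phi\|_2^2$; a Young inequality absorbs the $M^{p-2}$ factor, and taking $2/p$-th powers (legitimate since $p>2$) gives \eqref{eq:linear}, the case $t<0$ following by time reversal.

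For the non-homogeneous estimate I would run the same scheme on $v(t)=\int_0^t S(t-s)f(s)\,\ds$, which solves $v_t=f+i\Delta v$ with $v(0)=0$. Differentiating $\|v\|_p^p$ now produces an extra term $p\,\partere\int|v|^{p-2}\bar v f\,\dx$, bounded by $p\|v\|_p^{p-1}\|f\|_p$, which after integration and Cauchy–Schwarz in time contributes $\lesssim M^{p-1}T^{1/2}\|f\|_{L^2((0,T);L^p)}$. The gradient term is handled as before, except that now $\int_0^T\|\nabla v\|_p^2\,\ds$ must be estimated: since $(2,p)$ is not admissible, I would first pass by Hölder in time to the admissible exponent $k$ (using $k>2$, which holds precisely because $p<2^*$), $\|\nabla v\|_{L^2((0,T);L^p)}\le T^{1/2-1/k}\|\nabla v\|_{L^k((0,T);L^p)}$, and then apply the inhomogeneous Strichartz estimate \eqref{eq:strichartznaohomogeneo} to $\nabla v(t)=\int_0^t S(t-s)\nabla f(s)\,\ds$ with admissible pairs $(k,p)$ and $(q,r)$, giving $\|\nabla v\|_{L^k((0,T);L^p)}\lesssim\|\nabla f\|_{L^{q'}((0,T);L^{r'})}$. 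Collecting terms yields $M^p\lesssim M^{p-1}B + M^{p-2}D$ with $B\sim T^{1/2}\|f\|_{L^2 L^p}$ and $D\sim T^{1-2/k}\|\nabla f\|_{L^{q'}L^{r'}}^2$, and two Young inequalities close the estimate; since $1/2$ and $1/2-1/k$ are both positive, $C(T)=\max(T^{1/2},T^{1/2-1/k})$ is increasing and bounded on bounded intervals, as required.

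The main obstacle I anticipate is the gradient term: one must verify that the integration by parts leaves exactly a quantity controllable by $\int|u|^{p-2}|\nabla u|^2\,\dx$ (and not an expression involving more derivatives), and then arrange the Hölder split so that the gradient appears in $L^p$ raised to the power $2$. This is precisely what lets the ordinary $L^2$-based Strichartz estimate act on $\nabla\phi\in L^2$ and generate the time weight $|t|^{1-2/k}$. The remaining work is bookkeeping of the Young inequalities (valid since $p>2$) and of the time exponents, all justified for $\phi\in\mathcal S(\real^d)$ and $f\in C([0,T];\mathcal S(\real^d))$, where the differentiations and integrations by parts are classical.
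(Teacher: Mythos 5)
Your proposal is correct and follows essentially the same route as the paper's proof: differentiate the $L^p$ norm in time, integrate by parts so that only a term controlled by $\int|u|^{p-2}|\nabla u|^2$ survives, apply H\"older in space and in time, and invoke the Strichartz estimates on the gradient ($\nabla u(s)=S(s)\nabla\phi$ for the linear part, the inhomogeneous estimate with pairs $(k,p)$ and $(q,r)$ for $\nabla v$). The only differences are bookkeeping: the paper divides by $\|u\|_p^{p-2}$ to obtain a differential inequality for $\|u\|_p^2$ directly (and closes the non-homogeneous case by integrating $\|\nabla v\|_p^2+\|v\|_p^2+\|f\|_p^2$ with a Gronwall-type absorption), whereas you keep $p$-th powers and absorb via Young's inequality against $M=\sup_{[0,t]}\|u\|_p$ — both variants are valid and yield the stated estimates.
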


Notice that, due to the scaling invariance of the Schrödinger equation, the polynomial growth in time in the linear estimate is unavoidable.

\begin{proof}
For the linear estimate, write $u=S(t)\phi$. Then $u\in C^1(\real; H^2(\real^d))$ satisfies
$$
iu_t + \Delta u=0,\quad u(0)=\phi.
$$
Multiplying the equation by $|u|^{p-2}\bar{u}$, integrating over $\real^d$ and taking the imaginary part, we obtain
$$
\frac{1}{p}\frac{d}{dt} \|u(t)\|_p^p \le \left|\parteim \int |u|^{p-2}\bar{u}\Delta u\right|\le \frac{p-2}{2} \int |u|^{p-2}|\nabla u|^2 \le \frac{p-2}{2}\|u(t)\|_{p}^{p-2}\|\nabla u(t)\|_p^2
$$
Thus we have
$$
\frac{d}{dt} \|u(t)\|_p^2 \le (p-2)\|\nabla u(t)\|_p^2.
$$
An integration between $0$ and $t\in\real$ and the linear Strichartz estimate yield
$$
\|u(t)\|_p^2\lesssim \|\phi\|_p^2 + \int_0^t \|\nabla u(s)\|_p^2 ds\lesssim \|\phi\|_p^2 + |t|^{1-\frac{2}{k}}\left(\int_0^t \|\nabla u(s)\|_p^k ds\right)^{\frac{2}{k}} \lesssim \|\phi\|_p^2 + |t|^{1-\frac{2}{k}}\|\nabla \phi\|_2^2.
$$
For the non-homogeneous estimate, set $v(t)=-i\int_0^t S(t-s)f(s)ds$. Then $v\in C^1([0,T]; H^1(\real^d))$ satisfies
$$
iv_t+\Delta v = f,\quad v(0)=0.
$$
As for the previous estimate, we have
$$
\frac{1}{p}\frac{d}{dt} \|v(t)\|_p^p \lesssim\|v(t)\|_{p}^{p-2}\|\nabla v(t)\|_p^2 + \|v(t)\|_p^{p-1}\|f(t)\|_p
$$
and so
$$
\frac{d}{dt} \|v(t)\|_p^2 \lesssim \|\nabla v(t)\|_p^2 + \|v(t)\|_p\|f(t)\|_p \lesssim \|\nabla v(t)\|_p^2 + \|v(t)\|_p^2 + \|f(t)\|_p^2.
$$
The required estimate now follows by direct integration in $(0,t)$, $0<t<T$, and by the non-homogeneous Strichartz estimate.
\end{proof}

\begin{lema}[Local Strichartz estimate without loss of derivatives]
Given $f\in C([0,T], \mathcal{S}(\real^d))$,
\begin{equation}\label{strichartzlocal}
\left\|\int_0^\cdot S(\cdot-s)f(s)ds\right\|_{L^\infty((0,T), L^p(\real^d))} \lesssim C(T,q)\|f\|_{L^{q'}((0,T), L^{p'}(\real^d))}, \quad \frac{1}{q}>d\left(\frac{1}{2}-\frac{1}{p}\right).
\end{equation}
\end{lema}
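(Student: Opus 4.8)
The plan is to reduce everything to the pointwise-in-time dispersive decay estimate for the free group and then to perform a single Hölder estimate in the time variable. Writing $\alpha := d\left(\frac12 - \frac1p\right)$, the key input is
\[
\|S(t)\phi\|_p \lesssim |t|^{-\alpha}\|\phi\|_{p'},\quad t\neq 0,
\]
which I would recall by interpolating (Riesz--Thorin) the mass conservation $\|S(t)\phi\|_2=\|\phi\|_2$ with the explicit $L^1\to L^\infty$ bound $\|S(t)\phi\|_\infty\lesssim |t|^{-d/2}\|\phi\|_1$ coming from the Gaussian kernel representation of $S(t)$. Observe first that the standing assumption $2<p<2^*$ forces $0<\alpha<1$, so the kernel $|t-s|^{-\alpha}$ is locally integrable, and the hypothesis $1/q>\alpha$ is precisely what will be needed to control it in $L^q_s$.

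Fixing $t\in(0,T)$, I would then apply Minkowski's integral inequality to bring the spatial norm inside the time integral and use the dispersive estimate pointwise in $s$:
\[
\left\|\int_0^t S(t-s)f(s)\,ds\right\|_p \le \int_0^t \|S(t-s)f(s)\|_p\,ds \lesssim \int_0^t |t-s|^{-\alpha}\|f(s)\|_{p'}\,ds.
\]
Next I would apply Hölder's inequality in $s$ with the conjugate exponents $q$ and $q'$, placing $\|f(\cdot)\|_{p'}$ in $L^{q'}(0,t)$ and the kernel in $L^q(0,t)$:
\[
\int_0^t |t-s|^{-\alpha}\|f(s)\|_{p'}\,ds \le \left(\int_0^t |t-s|^{-\alpha q}\,ds\right)^{1/q}\|f\|_{L^{q'}((0,t),L^{p'})}.
\]
Here the condition $1/q>\alpha$ is exactly $\alpha q<1$, which guarantees convergence of the kernel integral; evaluating it gives $\int_0^t|t-s|^{-\alpha q}\,ds = t^{1-\alpha q}/(1-\alpha q)$, bounded by $T^{1-\alpha q}/(1-\alpha q)$ on $(0,T)$. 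Absorbing this into an increasing constant $C(T,q)$ and taking the supremum over $t\in(0,T)$ yields the claim.

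The computation is essentially routine; the only point worth emphasizing is that we work strictly below the admissible line, where $1/q=\alpha$ is the scaling-critical endpoint. At that endpoint one would be forced to invoke the sharp Hardy--Littlewood--Sobolev inequality (equivalently, the full admissible Strichartz estimate), and the $L^\infty_t$ target on the left would no longer be attainable. By staying in the strict regime $1/q>\alpha$ and exploiting the boundedness of the time interval, we trade scaling invariance for a harmless $T$-dependent constant and dispense with any fractional-integration machinery, using only Hölder. The main obstacle, such as it is, is merely the exponent bookkeeping and the verification that $0<\alpha<1$ across all admissible $(d,p)$, which I would check separately in the cases $d\ge 3$ (where $p<2^*=2d/(d-2)$) and $d=1,2$ (where finiteness of $p$ already suffices).
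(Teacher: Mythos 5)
Your proof is correct and follows essentially the same route as the paper: both reduce the claim to the dispersive decay estimate $\|S(t)\phi\|_{p}\lesssim |t|^{-d(1/2-1/p)}\|\phi\|_{p'}$, pass the $L^p$ norm inside the time integral, and conclude with H\"older's inequality in $s$, where the hypothesis $\frac{1}{q}>d\left(\frac{1}{2}-\frac{1}{p}\right)$ guarantees the local integrability of the kernel $|t-s|^{-qd(1/2-1/p)}$. Your additional remarks (the Riesz--Thorin origin of the decay estimate, the verification that the exponent lies in $(0,1)$, and the comment on the endpoint case) are accurate elaborations of details the paper leaves implicit.
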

\begin{proof}
This estimate follows easily from the decay estimates of the Schrödinger group: indeed, given $0<t<T$,
\begin{align*}
\left\|\int_0^t S(t-s)f(s)ds\right\|_{L^p(\real^d)}&\lesssim \int_0^T \|S(t-s)f(s)\|_{L^p(\real^d)}ds \\&\lesssim \int_0^T \frac{1}{|t-s|^{d\left(\frac{1}{2}-\frac{1}{p}\right)}}\|f(s)\|_{L^{p'}(\real^d)}ds \\&\lesssim \left(\int_0^T \frac{1}{|t-s|^{qd\left(\frac{1}{2}-\frac{1}{p}\right)}} ds\right)^{\frac{1}{q}}\|f\|_{L^{q'}((0,T), L^{p'}(\real^d))}.
\end{align*}
\end{proof}

\noindent We set 
$$X_p(\real^d)=L^p(\real^d)\cap \dot{H}^1(\real^d).$$

\begin{nota}
From the Gagliardo-Nirenberg inequality, we have $H^1(\real^d)\hookrightarrow X_p(\real^d)$.
\end{nota}

\begin{prop}
The Schrödinger group $\{S(t)\}_{t\in\real}$ over $H^1(\real^d)$ defines, by continuous extension, a one-parameter continuous group on $X_p(\real^d)$.
\end{prop}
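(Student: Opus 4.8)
The plan is to establish the three standard ingredients for a continuous extension: a uniform-on-compact-time-intervals operator bound in the $X_p$-norm over a dense subspace, the density of that subspace, and strong continuity. Throughout I equip $X_p(\real^d)$ with the norm $\|u\|_{X_p}=\|u\|_p+\|\nabla u\|_2$. The first step is the a priori bound for $\phi\in\mathcal{S}(\real^d)$. The homogeneous seminorm is preserved exactly: since $\nabla$ is a Fourier multiplier it commutes with $S(t)$, and $S(t)$ is an isometry on $L^2$, so $\|\nabla S(t)\phi\|_2=\|S(t)\nabla\phi\|_2=\|\nabla\phi\|_2$. For the $L^p$ part I invoke the linear estimate \eqref{eq:linear} of Proposition \ref{prop:estimativasLp}, namely $\|S(t)\phi\|_p^2\lesssim\|\phi\|_p^2+|t|^{1-2/k}\|\nabla\phi\|_2^2$. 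Because $p<2^*$ one has $1-\tfrac2k=1-d(\tfrac12-\tfrac1p)>0$, so $|t|^{1-2/k}$ is bounded on bounded intervals; combining the two bounds gives $\|S(t)\phi\|_{X_p}\le C(t)\|\phi\|_{X_p}$ with $C$ increasing and locally bounded.

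The main technical point is the density of $H^1(\real^d)$ in $X_p(\real^d)$. Given $u\in X_p$, I would truncate by $u_R=\chi(\cdot/R)u$ for a fixed cutoff $\chi$. Then $u_R$ has compact support and belongs to $L^p$, hence to $L^2$, and $\nabla u_R\in L^2$, so $u_R\in H^1$. To show $u_R\to u$ in $X_p$, the $L^p$ convergence is dominated convergence, while for the gradient I write $\nabla u_R=\chi(\cdot/R)\nabla u+R^{-1}(\nabla\chi)(\cdot/R)u$; the first term converges to $\nabla u$ in $L^2$, and the second is estimated by Hölder with exponents $p$ and $2p/(p-2)$ together with a scaling computation that produces the factor $R^{((d-2)p-2d)/(2p)}$. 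This exponent is negative precisely when $p<2^*$, so the remainder vanishes as $R\to\infty$; mollifying each $u_R$ then yields an approximating sequence in $\mathcal{S}(\real^d)$. This is the step where the restriction $p<2^*$ is genuinely used and is the place where I expect the only real work.

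With the bound and density in hand, $S(t)$ extends uniquely to a bounded operator on $X_p$ (still denoted $S(t)$) satisfying the same bound $C(t)$. The algebraic identities $S(0)=\mathrm{Id}$, $S(t)S(s)=S(t+s)$ and the invertibility $S(t)^{-1}=S(-t)$ hold on the dense subspace $H^1$ and pass to all of $X_p$ by continuity, giving the one-parameter group structure. For strong continuity I would first treat $\phi\in H^1$: then $S(t)\phi\to\phi$ in $H^1$ as $t\to0$, and since $H^1\hookrightarrow L^p$ (the Remark) controls the $L^p$ part while the gradient part is handled by the $L^2$-isometry, one gets $S(t)\phi\to\phi$ in $X_p$. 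A standard $\varepsilon/3$ argument, using density together with the local-in-time uniform bound $C(t)$, upgrades this to $S(t)u\to u$ in $X_p$ for every $u\in X_p$, and the group law then yields continuity of $t\mapsto S(t)u$ on all of $\real$. All of this is routine once the density step and the a priori bound are secured.
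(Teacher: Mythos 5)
Your proof is correct and follows essentially the same route as the paper's: the a priori bound obtained by combining the exact $\dot{H}^1$ isometry with the linear estimate \eqref{eq:linear} (noting $1-\tfrac{2}{k}>0$ for $p<2^*$), extension to $X_p$ by density, the group identities by continuity, and an $\varepsilon/3$ argument for strong continuity at $t=0$ using $H^1(\real^d)\hookrightarrow X_p(\real^d)$. The only difference is that you explicitly verify the density of $H^1(\real^d)$ (indeed of $\mathcal{S}(\real^d)$) in $X_p(\real^d)$ via truncation and mollification, with the correct cutoff exponent $((d-2)p-2d)/(2p)<0$ precisely for $p<2^*$ --- a step the paper's proof uses implicitly without justification, so this is a welcome addition rather than a deviation.
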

\begin{proof}
Given any $\phi\in \dot{H}^1(\real^d)$, we have $\|S(t)\nabla\phi\|_2=\|\nabla \phi\|_2$. Together with Proposition \ref{prop:estimativasLp}, this implies that
$$
\|S(t)\phi\|_{X_p}\lesssim (1+|t|^{1-\frac{2}{k}})^{1/2}\|\phi\|_{X_p}, \ t\in\real.
$$
Therefore, for each fixed $t\in\real$, $S(t)$ may be extended continuosly to $X_p$. By density, it follows easily that $S(t+s)=S(t)S(s)$, $t,s\in\real$, and $S(0)=I$ on $X_p$. Finally, we prove continuity at $t=0$: given $\phi\in X_p(\real^d)$ and $\epsilon>0$, take $\phi_\epsilon\in H^1(\real^d)$ such that 
$$
\|\phi_\epsilon-\phi\|_{X_p}<\epsilon.
$$
Then
\begin{align*}
\limsup_{t\to0}\|S(t)\phi - \phi\|_{X_p}&\le \limsup_{t\to0}\left(\|S(t)(\phi-\phi_\epsilon)\|_{X_p} + \|S(t)\phi_\epsilon-\phi_\epsilon\|_{X_p} + \|\phi_\epsilon-\phi\|_{X_p}\right)\\&\lesssim \limsup_{t\to0}\left((1+|t|^{1-\frac{2}{k}})^{1/2}\|\phi-\phi_\epsilon\|_{X_p} + \|S(t)\phi_\epsilon-\phi_\epsilon\|_{H^1} \right)\lesssim \epsilon.
\end{align*}
\end{proof}

\begin{nota}
Fix $d=1$. Using the same ideas, one may easily observe that the Schrödinger group is well-defined on the Zhidkov space
$$
X^2(\real)=\{u\in L^\infty(\real): \nabla u\in H^1(\real)\}.
$$
Indeed, for any $2\le p\le \infty$ a direct integration of the equation gives
$$
\frac{d}{dt}\|u(t)\|_p \le \|\Delta u(t)\|_p.
$$
Hence, choosing $k$ so that $(k,p)$ is an admissible pair,
$$
\|u(t)\|_p \le \|u_0\|_p + \int_0^t \|\Delta u(s)\|_p ds \le \|u_0\|_p + Ct^{1-\frac{1}{k}}\|\Delta u\|_{L^k((0,t), L^p)}\le\|u_0\|_p + Ct^{1-\frac{1}{k}}\|\Delta u_0\|_{2},
$$
where $C$ is a constant independent on $p$ (this comes from the fact that such a constant may be obtained via the interpolation between $L^{\infty}_t L^2_x$ and $L^{4}_tL^\infty_x$). Then, taking the limit $p\to \infty$, we obtain
$$
\|u(t)\|_\infty\lesssim \|u_0\|_\infty + t^{\frac{3}{4}}\|\Delta u_0\|_2, \quad t>0,\ u_0\in H^2(\real).
$$
For higher dimensions, a similar procedure may be applied, at the expense of some derivatives (one must use Sobolev injection to control $L^p$, with $p$ large). As one might expect, this argument does not provide the best possible estimate: in \cite{gallo}, one may see that
$$
\|u(t)\|_\infty \lesssim (1+t^{\frac{1}{4}})\left(\|u_0\|_\infty + \|\nabla u_0\|_2\right),\quad t>0, u_0\in H^1(\real).
$$
\end{nota}

\begin{nota}
One may ask if the required regularity is optimal: can we define the Schrödinger group on $X_p^s(\real^d):=\dot{H}^s(\real^d)\cap L^p(\real^d)$? What is the optimal $s$? Taking into consideration the previous remark, we conjecture that it should be possible to lower the regularity assumption. This entails a deeper analysis of the Schrödinger group, as it was done in \cite{gallo}.
\end{nota}

\section{Local well-posedness for $p\le 2\sigma+2$}

In order to clarify what do we mean by a solution of (NLS), we give the following
\begin{defi}[Solution over $X_p(\real^d)$]
Given $u_0\in X_p(\real^d)$, we say that $u\in C([0,T], X_p(\real^d))$ is a solution of (NLS) with initial data $u_0$ if the Duhamel formula is valid:
$$
u(t)=S(t)u_0 + i\lambda \int_0^t S(t-s)|u(s)|^\sigma u(s)ds,\quad t\in [0,T].
$$
\end{defi}
\noindent Throughout this section, let $(\gamma,\rho)$ and $(q,r)$ be admissible pairs such that
\begin{equation}\label{defipares}
r=(\sigma+1)\rho'=\max\{\sigma+2, p\}.
\end{equation}
It is easy to check that such pairs are well-defined for $p\le 2\sigma+2$.
\begin{prop}[Uniqueness over $X_p(\real^d)$]
Suppose that $p\le 2\sigma+2$. Let $u_1, u_2\in C([0,T], X_p(\real^d))$ be two solutions of (NLS) with initial data $u_0\in X_p(\real^d)$. Then $u_1\equiv u_2$.
\end{prop}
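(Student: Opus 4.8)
The plan is to subtract the two Duhamel formulas and run an absorbing (self-improving) estimate for the difference $w=u_1-u_2$ in a Strichartz space on a short time interval. From the definition of solution, $w$ satisfies
\[
w(t)=i\lambda\int_0^t S(t-s)\left(|u_1(s)|^\sigma u_1(s)-|u_2(s)|^\sigma u_2(s)\right)ds,\quad t\in[0,T],
\]
so $w$ is exactly the Duhamel term associated to the nonlinearity difference, and the non-homogeneous Strichartz estimate \eqref{eq:strichartznaohomogeneo} with the pairs $(q,r)$, $(\gamma,\rho)$ of \eqref{defipares} applies to it (after the standard density argument extending \eqref{eq:strichartznaohomogeneo} to the nonlinearity difference).

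First I would verify that both solutions belong to the relevant Strichartz space. Since $\dot H^1(\real^d)\hookrightarrow L^{2^*}(\real^d)$ and $p\le r=\max\{\sigma+2,p\}<2^*$ — the last inequality being precisely the subcriticality $\sigma<4/(d-2)^+$ — interpolation between $L^p$ and $L^{2^*}$ gives the embedding $X_p(\real^d)\hookrightarrow L^r(\real^d)$. Hence $u_1,u_2\in C([0,T],X_p)\subset L^\infty((0,T);L^r)$ with a bound $M:=\max_j\|u_j\|_{L^\infty((0,T);L^r)}$ controlled by their $C([0,T],X_p)$-norms. In particular $w\in L^\infty((0,T);L^r)$, so $\|w\|_{L^q((0,\tau);L^r)}<\infty$ for every $\tau\le T$; the finiteness of this quantity is what will allow the absorption at the end.

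Next I would estimate the nonlinearity. Pointwise in time, $\big||u_1|^\sigma u_1-|u_2|^\sigma u_2\big|\lesssim(|u_1|^\sigma+|u_2|^\sigma)|w|$, and Hölder in space with the relation $\tfrac{1}{\rho'}=\tfrac{\sigma+1}{r}$ (i.e. $r=(\sigma+1)\rho'$) gives $\big\||u_1|^\sigma u_1-|u_2|^\sigma u_2\big\|_{L^{\rho'}}\lesssim M^\sigma\|w\|_{L^r}$. Plugging this into \eqref{eq:strichartznaohomogeneo} over $I=(0,\tau)$ and then applying Hölder in time yields
\[
\|w\|_{L^q(I;L^r)}\lesssim M^\sigma\|w\|_{L^{\gamma'}(I;L^r)}\lesssim M^\sigma\,\tau^{\theta}\,\|w\|_{L^q(I;L^r)},\qquad \theta:=\tfrac{1}{\gamma'}-\tfrac1q .
\]

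The crux is that $\theta>0$. Using admissibility, $\tfrac1\gamma=\tfrac d2\big(\tfrac{\sigma+1}{r}-\tfrac12\big)$ and $\tfrac1q=\tfrac d2\big(\tfrac12-\tfrac1r\big)$, so $\theta=1-\tfrac1\gamma-\tfrac1q=1-\tfrac{d\sigma}{2r}$, which is strictly positive because $r\ge\sigma+2$ forces $\tfrac{d\sigma}{2r}\le\tfrac{d\sigma}{2(\sigma+2)}<1$, again exactly by $\sigma<4/(d-2)^+$. Choosing $\tau$ small enough that $CM^\sigma\tau^\theta<1$ (a threshold depending only on $M$, hence independent of the base point), the displayed inequality forces $\|w\|_{L^q((0,\tau);L^r)}=0$, so $u_1\equiv u_2$ on $[0,\tau]$. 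Since this $\tau$ is uniform, I would iterate the argument on $[\tau,2\tau],[2\tau,3\tau],\dots$ and cover $[0,T]$ in finitely many steps. The main obstacle is not any single hard estimate but the bookkeeping of exponents: confirming that $(q,r)$ and $(\gamma,\rho)$ from \eqref{defipares} are admissible for $p\le2\sigma+2$ and that the time gain $\theta$ is positive; the embedding $X_p\hookrightarrow L^r$ and the density justification for \eqref{eq:strichartznaohomogeneo} are routine.
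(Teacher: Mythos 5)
Your proposal is correct and follows essentially the same route as the paper: difference of the Duhamel formulas, the non-homogeneous Strichartz estimate with the pairs $(q,r)$, $(\gamma,\rho)$ of \eqref{defipares}, the embedding $X_p(\real^d)\hookrightarrow L^r(\real^d)$, and a time-gain absorption. The only difference is that you prove the final absorption/iteration step by hand (checking $\theta=1-\tfrac{d\sigma}{2r}>0$ and covering $[0,T]$ in finitely many steps), whereas the paper outsources exactly this step to \cite[Lemma 4.2.2]{cazenave}.
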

\begin{proof}
Taking the difference between the Duhamel formula for $u_1$ and $u_2$,
$$
u_1(t)-u_2(t)=i\lambda\int_0^t S(t-s)\left(|u_1(s)|^\sigma u_1(s) - |u_2(s)|^\sigma u_2(s)\right)ds
$$
Then, for any interval $J=[0,t]\subset [0,T]$, since $X_p(\real^d)\hookrightarrow L^r(\real^d)$,
\begin{align*}
\|u_1-u_2\|_{L^q(J, L^r)} &\lesssim \||u_1|^\sigma u_1- |u_2|^\sigma u_2\|_{L^{\gamma'}(J, L^{\rho'})} \\&\lesssim \left\|(\|u_1\|^\sigma_{r} + \|u_2\|_r^\sigma)\|u_1-u_2\|_r \right\|_{L^{\gamma'}(J)} \\&\lesssim \left(\|u_1\|_{L^\infty([0,T], X_p(\real^d))} + \|u_2\|_{L^\infty([0,T], X_p(\real^d))}\right)\|u_1-u_2\|_{L^{\gamma'}(J, L^r)}\\&\lesssim C(T)\|u_1-u_2\|_{L^{\gamma'}(J, L^r)}
\end{align*}
The claimed result now follows from \cite[Lemma 4.2.2]{cazenave}.
\end{proof}
\begin{teo}[Local well-posedness on $X_p(\real^d)$, $p\le 2\sigma+2$]\label{teo:lwp}
Given $u_0\in X_p(\real^d)$, there exists $T=T(\|u_0\|_{X_p})>0$ and an unique solution 
$$u\in C([0,T), X_p(\real^d))\cap L^\gamma((0,T),\dot{W}^{1,\rho}(\real^d))\cap L^q((0,T), \dot{W}^{1,r}(\real^d))$$
of (NLS) with initial data $u_0$. One has
\begin{equation}\label{smoothing}
u-S(\cdot)u_0 \in C([0,T]. L^2(\real^d))\cap L^q((0,T), L^{r}(\real^d)) \cap L^\gamma((0,T), L^{\rho}(\real^d)) .
\end{equation}
Moreover, the solution depends continuously on the initial data and may be extended in an unique way to a maximal time interval $[0,T^*(u_0))$. If $T^*(u_0)<\infty$, then
$$
\lim_{t\to T^*(u_0)} \|u(t)\|_{X_p} = +\infty.
$$
\end{teo}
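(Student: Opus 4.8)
The plan is to run a fixed-point argument for the Duhamel map
$$ \Phi(u) = S(\cdot)u_0 + i\lambda\int_0^\cdot S(\cdot-s)|u(s)|^\sigma u(s)\,ds, $$
adapting the classical $H^1$ contraction scheme to $X_p$. The structural difficulty is that, since $u_0$ need not lie in $L^2$, the free evolution $S(\cdot)u_0$ is \emph{not} in any $L^2$-based Strichartz space; only its gradient is, because $\nabla S(\cdot)u_0 = S(\cdot)\nabla u_0$ with $\nabla u_0\in L^2$. Accordingly I would work, for $T,M>0$ to be fixed, in the ball $B_M$ of the space
$$ \mathcal{E} = \{ u : u\in L^\infty((0,T),L^p),\ \nabla u\in L^\infty((0,T),L^2)\cap L^q((0,T),L^r)\cap L^\gamma((0,T),L^\rho) \}, $$
normed by the sum of these four quantities. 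That $S(\cdot)u_0\in B_M$ rests on three facts: conservation of $\|\nabla u_0\|_2$, the homogeneous Strichartz estimate \eqref{eq:strichartzhomogeneo} applied to $S(\cdot)\nabla u_0$, and the linear estimate \eqref{eq:linear} to bound $\|S(\cdot)u_0\|_{L^\infty((0,T),L^p)}$.

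For the self-mapping property I would estimate $\Phi(u)$ piece by piece. The three gradient norms are controlled exactly as in the $H^1$ theory: from $\nabla(|u|^\sigma u)\lesssim|u|^\sigma|\nabla u|$ and the Hölder relation $r=(\sigma+1)\rho'$ one gets $\||u|^\sigma\nabla u\|_{\rho'}\lesssim\|u\|_r^\sigma\|\nabla u\|_r$, after which \eqref{eq:strichartzhomogeneo}--\eqref{eq:strichartznaohomogeneo}, the embedding $X_p\hookrightarrow L^r$ (valid since $p\le r\le 2^*$), and Hölder in time (producing a gain $T^\theta$, $\theta>0$) close the $L^\infty(L^2)$, $L^q(L^r)$ and $L^\gamma(L^\rho)$ bounds for $\nabla\Phi(u)$. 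The genuinely new estimate is the $L^p$-bound, for which I would invoke \eqref{eq:naohomogenea}: its gradient term is the $H^1$ quantity just discussed, while its first term requires controlling $\||u|^\sigma u\|_{L^2((0,T),L^p)}=\|u\|_{L^{2(\sigma+1)}((0,T),L^{(\sigma+1)p})}^{\sigma+1}$. This is exactly the delicate point, since $(\sigma+1)p$ may exceed $2^*$, so that the $L^{(\sigma+1)p}$-norm is \emph{not} dominated by the $X_p$-norm; it must instead be recovered by interpolating the bound $u\in L^\infty(L^p)$ against the Sobolev consequences $\dot{W}^{1,r}\hookrightarrow L^{r^*}$, $\dot{H}^1\hookrightarrow L^{2^*}$ of the gradient–Strichartz bounds. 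Checking that the required space-time exponent lies in the convex hull of the available ones is where the constraint $p\le2\sigma+2$ (beyond mere admissibility) is used. Taking $M\sim\|u_0\|_{X_p}$ and then $T$ small, with $C(T)$ staying bounded and the smallness supplied by $T^\theta$, makes $\Phi$ map $B_M$ into itself.

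The contraction I would measure in the weaker metric $d(u_1,u_2)=\|u_1-u_2\|_{L^q(L^r)}+\|u_1-u_2\|_{L^\gamma(L^\rho)}$. Here the free term cancels in $\Phi(u_1)-\Phi(u_2)$, so the difference is a pure Duhamel integral and the ordinary Strichartz estimates apply with no $L^p$-without-$L^2$ obstruction; with $\big||u_1|^\sigma u_1-|u_2|^\sigma u_2\big|\lesssim(|u_1|^\sigma+|u_2|^\sigma)|u_1-u_2|$ and the same factor $T^\theta$, shrinking $T$ gives $d(\Phi(u_1),\Phi(u_2))\le\tfrac12 d(u_1,u_2)$. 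Completeness of $(B_M,d)$ follows from the standard weak-$*$ lower semicontinuity argument (cf. \cite{cazenave}), so $\Phi$ has a unique fixed point $u\in B_M$; uniqueness in the full class is the preceding proposition. The smoothing \eqref{smoothing} is then automatic, since $u-S(\cdot)u_0=i\lambda\int_0^\cdot S(\cdot-s)|u|^\sigma u\,ds$ is a Duhamel integral: \eqref{eq:strichartznaohomogeneo} with the endpoint pair $(\infty,2)$ on the left places it in $C([0,T],L^2)$, and the pairs $(q,r)$, $(\gamma,\rho)$ place it in the remaining spaces.

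Finally, continuous dependence follows by applying the same difference estimates to solutions issued from nearby data, and the blow-up alternative from the fact that the existence time $T$ depends only on $\|u_0\|_{X_p}$: were $T^*(u_0)<\infty$ with $\|u(t)\|_{X_p}$ bounded along a sequence $t\to T^*(u_0)$, one could restart the solution from a time close to $T^*(u_0)$ and extend past it, a contradiction. The whole difficulty is concentrated in the $L^p$-bound of the self-mapping step, where the lack of a homogeneous Strichartz estimate off $L^2$ forces the loss-of-derivative estimate \eqref{eq:naohomogenea} together with the interpolation needed to absorb its $L^2(L^p)$ term.
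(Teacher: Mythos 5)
Your scheme matches the paper's in its gradient estimates, contraction metric, and treatment of the free term, but the step you yourself single out as the crux --- the $L^\infty_t L^p_x$ bound on the Duhamel integral --- is handled in a way that fails. Invoking \eqref{eq:naohomogenea} forces you to control $\||u|^\sigma u\|_{L^2((0,T),L^p)}=\|u\|^{\sigma+1}_{L^{2(\sigma+1)}((0,T),L^{p(\sigma+1)})}$, and you assert that the spatial exponent $p(\sigma+1)$ is reachable by interpolating $u\in L^\infty_t L^p_x$ against the Sobolev consequences of the gradient bounds, with $p\le 2\sigma+2$ guaranteeing this. That assertion is false in general. Take $d=7$, $\sigma=7/10$, $p=\sigma+2=27/10$: then $p<2^*=14/5$ and $p\le 2\sigma+2$, so the theorem applies, and $r=\rho=\max\{\sigma+2,p\}=27/10$. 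The four quantities in your ball give $u$ spatial integrability at most $\max\{2^*,r^*\}$ with $r^*=dr/(d-r)=189/43\approx 4.40$, whereas $p(\sigma+1)=459/100=4.59$; since no H\"older/Sobolev/interpolation combination of Lebesgue norms can exceed the largest available exponent, the required norm is out of reach and the self-mapping estimate cannot be closed. (The condition $p\le 2\sigma+2$ is actually consumed elsewhere: it is what makes the pairs in \eqref{defipares} with $r=(\sigma+1)\rho'$ exist, since $\rho\ge 2$ forces $(\sigma+1)\rho'\le 2\sigma+2$.) In effect you have reinvented the strategy the paper uses in Section 4 for $p>2\sigma+2$, where it is shown precisely that this route collapses outside $d=1,2$.

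The missing idea is that for $p\le 2\sigma+2$ the loss-of-derivative estimate \eqref{eq:naohomogenea} is not needed at all, because the Duhamel term is better than the solution. The paper's fixed-point norm includes $\|u-S(\cdot)u_0\|_{\mathcal{S}_1}$, where $\mathcal{S}_1=L^\infty_t H^1\cap L^q_t W^{1,r}\cap L^\gamma_t W^{1,\rho}$; that is, it tracks the difference from the free flow in full $L^2$-based Strichartz spaces, at the zero-derivative level as well, not just $\nabla u$. The nonlinearity is then estimated once, in $L^{\gamma'}_t W^{1,\rho'}_x$: by H\"older with $r=(\sigma+1)\rho'$ one only needs $\|u\|_r$ (controlled via $X_p\hookrightarrow L^r$, legitimate because $r=\max\{\sigma+2,p\}<2^*$) and $\|\nabla u\|_r$ (split as $\nabla(u-S(\cdot)u_0)$ plus $S(\cdot)\nabla u_0$, the latter handled by \eqref{eq:strichartzhomogeneo}). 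Then \eqref{eq:strichartznaohomogeneo} bounds $\|\Phi u-S(\cdot)u_0\|_{\mathcal{S}_1}$, and the $L^\infty_t L^p_x$ bound on $\Phi u$ comes for free: \eqref{eq:linear} controls $S(\cdot)u_0$, while the Gagliardo--Nirenberg embedding $H^1\hookrightarrow L^p$ controls $\Phi u-S(\cdot)u_0$. So the smoothing property \eqref{smoothing}, which in your write-up appears as an after-the-fact corollary, is in the paper the very mechanism that closes the scheme; your space, keeping only $u\in L^\infty_t L^p_x$ and $\nabla u$ in Strichartz norms, discards exactly the $L^2$-level information on $u-S(\cdot)u_0$ that makes the $L^p$ estimate trivial.
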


\begin{nota}
The property \eqref{smoothing} is a type of nonlinear "smoothing" effect: the integral term in Duhamel's formula turns out to have more integrability than the solution itself (a similar property was seen in \cite{gerard}). This insight allows the use of Strichartz estimates at the zero derivatives level. Without this possibility, one would be restricted to the estimate \eqref{strichartzlocal} and the possible ranges of $\sigma$ and $p$ would be significantly smaller.
\end{nota}

\begin{proof}
\textit{Step 1.} Define
$$
\mathcal{S}_0=L^\infty((0,T), L^2) \cap L^q((0,T), L^r) \cap L^\gamma((0,T), L^\rho).
$$
and
$$
\mathcal{S}_1=L^\infty((0,T), H^1) \cap L^q((0,T), W^{1,r}) \cap L^\gamma((0,T), W^{1,\rho}).
$$
Consider the space
\begin{align*}
\mathcal{E}=\Big\{& u\in L^\infty((0,T), X_p)\cap L^\gamma((0,T), \dot{W}^{1,\rho})\cap L^q((0,T), \dot{W}^{1,r}):\\ & \vvvert u\vvvert:= \|u\|_{L^\infty((0,T), L_p)} + \|u - S(\cdot)u_0\|_{\mathcal{S}_1} \le M\Big\}.
\end{align*}
endowed with the distance
$$
d(u,v)= \| u - v \|_{\mathcal{S}_0}.
$$
It is not hard to check that $(\mathcal{E}, d)$ is a complete metric space: indeed, if $\{u_n\}_{n\in\nat}$ is a Cauchy sequence in $\mathcal{E}$, then $\{u_n-S(\cdot)u_0\}_{n\in\nat}$ is a Cauchy sequence in $\mathcal{S}_0$.
Then there exists $u\in \mathcal{D}'([0,T]\times\real^d)$ such that $u_n-S(\cdot)u_0\to u-S(\cdot)u_0$ in $\mathcal{S}_0$. By \cite[Theorem 1.2.5]{cazenave}, this convergence implies that 
$$u-S(\cdot)u_0\in \mathcal{S}_1,\quad \|u-S(\cdot)u_0\|_{\mathcal{S}_1} \le \liminf \|u_n-S(\cdot)u_0\|_{\mathcal{S}_1}
$$
Finally, it follows from the Gagliardo-Nirenberg inequality that, for some $0<\theta<1$,
$$
\|u_n-u\|_{L^\infty((0,T), L^p)}\lesssim \|u_n-u\|_{L^\infty((0,T), L^2)}^{1-\theta}\|\nabla u_n-\nabla u\|_{L^\infty((0,T), L^2)}^\theta \to 0
$$
and so $u_n\to u$ in $L^\infty((0,T), L^p)$.

\noindent \textit{Step 2.} Define, for any $u\in \mathcal{E}$,
$$
(\Phi u)(t)= S(t)u_0 +i\lambda \int_0^t S(t-s)|u(s)|^\sigma u(s) ds, \quad 0<t<T.
$$
It follows from the definition of $r$ (see \eqref{defipares}) that $X_p(\real^d)\hookrightarrow L^r(\real^d)$. Then
\begin{align*}
\|\Phi u - S(\cdot)u_0\|_{\mathcal{S}_1}&\lesssim \||u|^\sigma u\|_{L^{\gamma'}((0,T), W^{1,\rho'})} \\&\lesssim \left\| \|u\|_{r}^{\sigma}( \|u\|_{r} + \|\nabla u\|_{r})\right\|_{L^{\gamma'}(0,T)}\\&\lesssim  \left\| \|u\|_{X_p}^{\sigma}( \|u\|_{X_p} + \|\nabla (u -S(\cdot)u_0)\|_{r} + \|S(\cdot)\nabla u_0\|_{r}\right\|_{L^{\gamma'}(0,T)}\\& \lesssim T^{\frac{1}{\gamma'}}\|u\|_{L^\infty((0,T), X_p)}^{\sigma+1} + T^{\frac{1}{\gamma'}-\frac{1}{q}}\|u\|_{L^\infty((0,T), X_p)}^{\sigma}\|\nabla (u -S(\cdot)u_0)\|_{L^{q}((0,T), L^{r})} \\&+ T^{\frac{1}{\gamma'}-\frac{1}{q}}\|u\|_{L^\infty((0,T), X_p)}^{\sigma}\|S(\cdot)\nabla u_0\|_{L^{q}((0,T), L^{r})}\\&
\lesssim \left(T^{\frac{1}{q'}} + T^{\frac{1}{q'}-\frac{1}{q}}\right)(M^{\sigma+1} + \|u_0\|_{X_p}^{\sigma+1}).
\end{align*}
It follows that, for $M \sim 2\|u_0\|_{X_p}$ and $T$ sufficiently small, we have $\Phi:\mathcal{E}\mapsto \mathcal{E}$.

\noindent \textit{Step 3.} Now we show a contraction estimate: given $u,v\in\mathcal{E}$, 
\begin{align*}
d(\Phi(u), \Phi(v))&\lesssim \| |u|^\sigma u - |v|^\sigma v\|_{L^{\gamma'}(L^{\rho'})} \\&\lesssim \left\| (\|u\|_{r}^\sigma + \|v\|_{r}^\sigma)\|u-v\|_{r} \right\|_{L^{\gamma'}(0,T)} \\&\lesssim T^{\frac{1}{\gamma'}-\frac{1}{q}}\left(\|u\|_{L^\infty((0,T), X_p(\real^d)}^\sigma + \|u\|_{L^\infty((0,T), X_p(\real^d)}^\sigma\right) \|u-v\|_{L^q((0,T), L^{r})} \\&\lesssim T^{\frac{1}{q'}-\frac{1}{q}}\left(M^\sigma + \|u_0\|_{X_p}^\sigma\right)d(u,v).
\end{align*}

Therefore, for $T=T(\|u_0\|_{X_p})$ small enough, the mapping $\Phi:\mathcal{E}\mapsto \mathcal{E}$ is a strict contraction and so, by Banach's fixed point theorem, $\Phi$ has a unique fixed point over $\mathcal{E}$. This gives the local existence of a solution $u\in C([0,T], X_p(\real^d))$ of (NLS) with initial data $u_0$. From the uniqueness result, such a solution can then be extended to a maximal interval of existence $(0,T^*(u_0))$. If such an interval is bounded, then necessarily one has $\|u(t)\|_{X_p}\to \infty$ as $t\to T^*(u_0)$. The continuous dependence on the initial data follows as in the $H^1$ case (see, for example, the proof of \cite[Theorem 4.4.1]{cazenave})
\end{proof}

\begin{nota}\label{nota:restricaop}
The condition $p\le 2\sigma+2$ is necessary for one to use Strichartz estimates with no derivatives. Indeed, when one applies Strichartz to the integral term of the Duhamel formula, one has
$$
\left\|\int_0^\cdot S(\cdot - s)|u(s)|^{\sigma}u(s)ds\right\|_{L^{q}((0,T), L^r)}\lesssim \|u\|_{L^{\gamma'(\sigma+1)}((0,T), L^{\rho'(\sigma+1)})}^{\sigma+1}
$$
for any admissible pairs $(q,r)$ and $(\gamma, \rho)$. Since the solution $u$ only lies on spaces with spatial integrability larger or equal than $p$, one must have $p\le \rho'(\sigma+1) \le 2\sigma+2$ (because $\rho\ge 2$).
\end{nota}

\begin{prop}[Persistence of integrability]\label{prop:persist1}
Suppose that $\tilde{p}<p\le 2\sigma+2$. Given $u_0\in X_{\tilde{p}}(\real^d)$, consider the $X_p(\real^d)$-solution $u\in C([0,T^*(u_0)),X_{p})$ of (NLS) with initial data $u_0$. Then $u\in C([0,T^*(u_0)),X_{\tilde{p}})$.
\end{prop}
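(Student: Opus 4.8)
The plan is to show that the $X_p$-solution, which a priori only controls the $L^p$-norm at the Lebesgue level, in fact also controls the smaller integrability $L^{\tilde p}$, and that this persists up to the maximal existence time $T^*(u_0)$. The key structural fact to exploit is the nonlinear smoothing property \eqref{smoothing}: the Duhamel integral term $u-S(\cdot)u_0$ lives in $L^\infty((0,T),L^2)\cap L^q((0,T),L^r)\cap L^\gamma((0,T),L^\rho)$, so the only obstruction to lowering integrability comes from the free evolution $S(\cdot)u_0$. Since $u_0\in X_{\tilde p}(\real^d)$ and $\tilde p<p$, Proposition \ref{prop:estimativasLp} (the linear estimate \eqref{eq:linear}, now with $(\tilde k,\tilde p)$ admissible) guarantees that $S(t)u_0\in C(\real,L^{\tilde p})$ with a polynomially-growing bound. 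Thus on any compact subinterval $[0,T]\subset[0,T^*(u_0))$ one expects both pieces of the decomposition $u=S(\cdot)u_0+(u-S(\cdot)u_0)$ to lie in $L^{\tilde p}$.

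First I would fix $T<T^*(u_0)$ and recall that on $[0,T]$ the solution satisfies \eqref{smoothing}, so $u-S(\cdot)u_0\in C([0,T],L^2)$; interpolating between $L^2$ and $L^p$ (both of which bracket $\tilde p$, since $2<\tilde p<p$) via the Gagliardo–Nirenberg/H\"older inequality yields $u-S(\cdot)u_0\in C([0,T],L^{\tilde p})$. Combining this with $S(\cdot)u_0\in C([0,T],L^{\tilde p})$ gives $u\in C([0,T],L^{\tilde p})$; since $u\in C([0,T],\dot H^1)$ already, we conclude $u\in C([0,T],X_{\tilde p})$. Because $T<T^*(u_0)$ was arbitrary, this upgrades to $u\in C([0,T^*(u_0)),X_{\tilde p})$.

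The one point requiring genuine care, rather than routine interpolation, is whether $\tilde p$ falls in the range $(2,2^*)$ for which the linear theory of Section 2 applies, so that $S(\cdot)u_0\in L^{\tilde p}$ is legitimate and an admissible pair $(\tilde k,\tilde p)$ exists. Here the hypothesis $\tilde p<p\le 2\sigma+2$ together with the standing assumption $2<p<2^*$ must be checked to ensure $2<\tilde p<2^*$; this is where I would verify that the implicit assumption $\tilde p>2$ (needed both for admissibility and for the interpolation endpoint) is in force, presumably as part of the meaning of $X_{\tilde p}$. Once that is settled, the remaining inequalities are a direct application of the results already established, and no fixed-point or bootstrap argument is needed—the persistence is a purely a posteriori consequence of the smoothing estimate \eqref{smoothing} and the linear bound \eqref{eq:linear}.

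The main obstacle I anticipate is not the interpolation itself but confirming that the two building blocks genuinely decompose the solution in $L^{\tilde p}$ uniformly on compact time intervals: one must ensure the constant $C(T)$ from the linear estimate and the $\mathcal S_0$-norm of $u-S(\cdot)u_0$ remain finite as $T\uparrow T^*(u_0)$ on each fixed compact subinterval, which follows since these quantities are controlled on $[0,T]$ for every $T<T^*(u_0)$ by the construction in Theorem \ref{teo:lwp}. No blow-up of the $L^{\tilde p}$-norm can occur strictly before $T^*(u_0)$ precisely because it is dominated by the $\dot H^1\cap L^p$-norm, which is finite there by definition of the maximal time.
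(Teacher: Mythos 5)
Your proposal is correct and follows essentially the same route as the paper: decompose $u=S(\cdot)u_0+(u-S(\cdot)u_0)$, control the free part in $L^{\tilde{p}}$ by the group bound on $X_{\tilde{p}}(\real^d)$ (legitimate since $u_0\in X_{\tilde{p}}(\real^d)$ and $2<\tilde{p}<2^*$), and control the Duhamel part on each compact subinterval of $[0,T^*(u_0))$ via the smoothing property \eqref{smoothing}. The only cosmetic differences are that the paper first invokes local well-posedness in $X_{\tilde{p}}(\real^d)$ together with uniqueness in $X_p(\real^d)$ and then phrases the conclusion through the blow-up alternative, and that it bounds the correction term by the embedding $H^1(\real^d)\hookrightarrow X_{\tilde{p}}(\real^d)$ (Gagliardo--Nirenberg, using the gradient part of the smoothing), whereas you use H\"older interpolation of $L^{\tilde{p}}$ between $L^2$ and $L^p$; both rest on the same $L^2$-level smoothing information, and your direct version is, if anything, slightly leaner.

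One sentence you should delete: the closing claim that blow-up of the $L^{\tilde{p}}$-norm before $T^*(u_0)$ is impossible because that norm ``is dominated by the $\dot H^1\cap L^p$-norm'' is false. There is no embedding $X_p(\real^d)\hookrightarrow L^{\tilde{p}}(\real^d)$ for $\tilde{p}<p$ (interpolation only gives the reverse inclusion $X_{\tilde{p}}(\real^d)\hookrightarrow X_p(\real^d)$), and if such a domination held the proposition would be trivial and the whole decomposition argument unnecessary. Fortunately this remark is not load-bearing: the actual mechanism excluding loss of $L^{\tilde{p}}$-integrability is your decomposition, both pieces of which are finite in $L^\infty((0,T),L^{\tilde{p}})$ for every $T<T^*(u_0)$.
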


\begin{proof}
By the local well-posedness result over $X_{\tilde{p}}(\real^d)$ and by the uniqueness over $X_p(\real^d)$, there exists a time $T_0>0$ such that $u\in C([0,T_0], X_{\tilde{p}}(\real^d))$. Thus the statement of the proposition is equivalent to saying that $u$ does not blow-up in $X_{\tilde{p}}(\real^d)$ at a time $T_0<T<T^*(u_0)$. Since $u$ is bounded in $X_p$ over $[0,T]$, it follows from the local existence theorem that
$$
\|u-S(\cdot)u_0\|_{L^\infty((0,T), H^1)} <\infty.
$$
Then, for any $0<t<T$,
\begin{align*}
\|u\|_{L^\infty((0,t), X_{\tilde{p}})} &\lesssim \|S(\cdot)u_0\|_{L^\infty((0,t), X_{\tilde{p}})} + \|u-S(\cdot)u_0\|_{L^\infty((0,t), X_{\tilde{p}})} \\&\lesssim \|u_0\|_{X_{\tilde{p}}} + \|u-S(\cdot)u_0\|_{L^\infty((0,t), H^1)} <\infty,
\end{align*}
which implies that $u$ does not blow-up at time $t=T$.
\end{proof}

\begin{prop}[Conservation of energy]
Suppose that $p\le \sigma+2$. Given $u_0\in X_p(\real^d)$, the corresponding solution $u$ of (NLS) with initial data $u_0$ satisfies
$$
E(u(t))=E(u_0):=\frac{1}{2}\|\nabla u_0\|_2^2 - \frac{\lambda}{\sigma+2}\|u_0\|_{\sigma+2}^{\sigma+2}, \ 0<t<T(u_0).
$$
Consequently, if $\lambda<0$, then $T^*(u_0)=\infty$. Moreover, if $\lambda>0$ and $T^*(u_0)<\infty$, then
\begin{equation}\label{limites}
\lim_{t\to T^*(u_0)} \|\nabla u(t)\|_2 = \lim_{t\to T^*(u_0)} \|u(t)\|_{\sigma+2} = \infty. 
\end{equation}
\end{prop}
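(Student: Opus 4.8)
The plan is to transfer the whole analysis to the energy scale $X_{\sigma+2}(\real^d)$, where the quantities controlled by $E$ are exactly the ones that govern the blow-up alternative, and to reach the general $X_p$ by persistence of integrability. First I would record the functional-analytic setup. Since $p\le\sigma+2<2^*$, the Gagliardo--Nirenberg inequality gives $\|v\|_{\sigma+2}\lesssim\|v\|_p^{1-\alpha}\|\nabla v\|_2^{\alpha}$ for a suitable $\alpha\in[0,1]$, so $X_p(\real^d)\hookrightarrow L^{\sigma+2}(\real^d)$ and $E$ is well-defined and continuous on $X_p(\real^d)$ (and on $X_{\sigma+2}(\real^d)$). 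The same interpolation shows $X_p(\real^d)\hookrightarrow X_{\sigma+2}(\real^d)$ and that the $X_p$-solution $u$, being continuous into $L^{\sigma+2}$, is simultaneously the $X_{\sigma+2}$-solution; by the uniqueness statement and by Proposition \ref{prop:persist1} the two maximal times coincide, so I may freely invoke the blow-up alternative of Theorem \ref{teo:lwp} at the level $\sigma+2$, where it reads $\|\nabla u(t)\|_2+\|u(t)\|_{\sigma+2}\to\infty$ as $t\to T^*(u_0)$.

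For the conservation law itself I would argue by approximation. The preliminary point is that $H^1(\real^d)$ is dense in $X_p(\real^d)$: given $u_0\in X_p$, the cut-off $\chi(\cdot/R)u_0$ converges to $u_0$ in $X_p$, the only delicate term being $R^{-1}(\nabla\chi)(\cdot/R)u_0$ in $\dot H^1$, which is estimated by Hölder on the annulus $R\le|x|\le 2R$ and tends to $0$ precisely because $d(1/2-1/p)<1$, i.e. $p<2^*$; a subsequent mollification produces $H^1$ approximants. For $H^1$ data energy conservation is classical in the subcritical range $\sigma<4/(d-2)^+$, and by uniqueness the $H^1$-solution agrees with the $X_{\sigma+2}$-solution. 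Choosing $u_0^n\in H^1$ with $u_0^n\to u_0$ in $X_p$, continuous dependence (Theorem \ref{teo:lwp}) gives $u^n\to u$ in $C([0,T],X_{\sigma+2})$ for every $T<T^*(u_0)$, and continuity of $E$ on $X_{\sigma+2}$ yields $E(u(t))=\lim_n E(u_0^n)=E(u_0)$.

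The consequences then follow from the identity $\|\nabla u(t)\|_2^2=2E(u_0)+\tfrac{2\lambda}{\sigma+2}\|u(t)\|_{\sigma+2}^{\sigma+2}$. If $\lambda<0$ both summands of $E$ are nonnegative, so $\|\nabla u(t)\|_2$ and $\|u(t)\|_{\sigma+2}$ are uniformly bounded; hence $\|u(t)\|_{X_{\sigma+2}}$ stays bounded and the blow-up alternative forces $T^*(u_0)=\infty$, while Proposition \ref{prop:persist1} keeps the solution in $X_p$. If $\lambda>0$ and $T^*(u_0)<\infty$, the blow-up alternative at the level $\sigma+2$ gives $\|\nabla u(t)\|_2+\|u(t)\|_{\sigma+2}\to\infty$; since the energy identity expresses $\|\nabla u\|_2$ as a strictly increasing function of $\|u\|_{\sigma+2}$ for large values, a finite $\liminf$ of either quantity would keep the sum bounded along a subsequence, a contradiction. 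Hence both limits in \eqref{limites} equal $+\infty$.

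The main obstacle I anticipate is not a single estimate but the correct choice of functional scale. Working directly in $X_p$ one meets the blow-up alternative phrased through $\|u\|_p$, and for $p<\sigma+2$ this norm cannot be controlled pointwise by the energy quantities $\|\nabla u\|_2$ and $\|u\|_{\sigma+2}$ (a fat tail keeps $\|u\|_p$ large while $\|u\|_{\sigma+2}$ stays finite). Recovering it through the smoothing decomposition $u=S(\cdot)u_0+w$ leads to a Gronwall inequality for $\|w\|_2$ that is delicate, and whose growth exponent can even become supercritical when $d=1$; moreover it would only give a $\limsup$ rather than the full limit in \eqref{limites}. Passing to the scale $\sigma+2$ via Proposition \ref{prop:persist1} sidesteps all of this and is exactly what upgrades a crude unboundedness conclusion to the stated full limits. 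The remaining care lies in the density step, where the hypothesis $p<2^*$ is used in an essential way.
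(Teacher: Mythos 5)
Your proposal is correct and follows essentially the same route as the paper: energy conservation is obtained by regularization from the classical $H^1$ case (your density-plus-continuous-dependence argument is exactly the ``regularization argument'' the paper invokes), and the consequences are derived by working at the $X_{\sigma+2}$ scale, using the coercivity of $E$ when $\lambda<0$, the blow-up alternative there, and Proposition \ref{prop:persist1} to return to $X_p$, with the $\lambda>0$ case handled by the same contradiction via the energy identity. The extra details you supply (the cut-off estimate using $p<2^*$ and the identification of the maximal times at the two scales) are accurate fillings of steps the paper leaves implicit.
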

\begin{proof}
Since the conservation law is valid for $u_0\in H^1(\real^d)$, through a regularization argument, the same is true for any $u_0\in X_p(\real^d)$. If $\lambda<0$, one has
$$
\|u(t)\|_{X_{\sigma+2}(\real^d)} \lesssim E(u_0), \quad 0<t<T^*(u_0).
$$
By the blow-up alternative, this implies that $u$, as a $X_{\sigma+2}(\real^d)$ solution, is globally defined. By persistence of integrability, this implies that $u$ is global in $X_p(\real^d)$. If $\lambda>0$, suppose by contradiction that \eqref{limites} is not true. Then, by conservation of energy, $u$ is bounded in $X_{\sigma+2}(\real^d)$ and therefore it is globally defined (as an $X_{\sigma+2}(\real^d)$ solution, but also as an $X_p(\real^d)$ solution, by persistence of integrability). 
\end{proof}

\begin{prop}\label{prop:globalsubcritico}
Fix $\lambda<0$. If
\begin{equation}
2\sigma^2 + (d+2)\sigma \le 4,
\end{equation}
then, for any $u_0\in X_{\sigma+2}(\real^d)$, the corresponding solution $u$ of (NLS) is globally defined.
\end{prop}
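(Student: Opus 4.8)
The plan is to preclude finite-time blow-up by means of a global-in-time a priori bound on $\|u(t)\|_{X_{\sigma+2}}$ together with the blow-up alternative of Theorem~\ref{teo:lwp}. Concretely, if one can show that $\|u(t)\|_{X_{\sigma+2}}$ grows at most polynomially in $t$ along the maximal interval $[0,T^*(u_0))$, then $T^*(u_0)<\infty$ is impossible: the blow-up alternative would force $\|u(t)\|_{X_{\sigma+2}}\to\infty$ as $t\to T^*(u_0)$, while a polynomial bound keeps it finite up to any finite time. So the whole problem reduces to a quantitative growth estimate for the two components $\|\nabla u(t)\|_2$ and $\|u(t)\|_{\sigma+2}$.

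The $\dot H^1$ component comes essentially for free from the defocusing sign. Since $\lambda<0$, the conservation of energy established above gives $\tfrac12\|\nabla u(t)\|_2^2\le E(u(t))=E(u_0)$, so $\|\nabla u(t)\|_2$ stays uniformly bounded on $[0,T^*(u_0))$. The remaining and genuine difficulty is the $L^{\sigma+2}$ component, which the energy does \emph{not} stabilise beyond what is already contained in $E$. Here I would exploit the differential identity underlying Proposition~\ref{prop:estimativasLp}: multiplying the equation by $|u|^{\sigma}\bar u$ and taking imaginary parts makes the nonlinear contribution cancel, leaving
\[
\frac{d}{dt}\|u(t)\|_{\sigma+2}^2\;\lesssim\;\sigma\,\|\nabla u(t)\|_{\sigma+2}^2 .
\]
Thus the growth of $\|u(t)\|_{\sigma+2}$ is governed entirely by the gradient measured in $L^{\sigma+2}$, and everything hinges on bounding $\int_0^t\|\nabla u\|_{\sigma+2}^2\,ds$ globally.

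Since $\|\nabla u\|_{\sigma+2}$ is not a conserved quantity, I would recover it through the one-derivative Strichartz estimates, exactly as in the contraction argument of Theorem~\ref{teo:lwp}: choosing $(q,\sigma+2)$ admissible, the homogeneous piece is controlled by $\|\nabla u_0\|_2$ (bounded, by the previous step), while the Duhamel piece produces a nonlinear feedback of the schematic form $T^{1-2/q}\,\|u\|_{L^\infty((0,T),L^{\sigma+2})}^{\sigma}\,\|\nabla u\|_{L^{q}((0,T),L^{\sigma+2})}$. Running this estimate on a sequence of short intervals and inserting the outcome into the differential inequality yields a recursion for $\|u(t)\|_{\sigma+2}$ in which each local step has length $\sim\|u\|_{\sigma+2}^{-\sigma/(1-2/q)}$. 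The role of the algebraic hypothesis $2\sigma^2+(d+2)\sigma\le 4$ is to force the accumulated growth of $\|u(t)\|_{\sigma+2}$ to remain at most polynomial — equivalently, to make these step-lengths sum to $+\infty$ — which is exactly what the blow-up alternative then needs.

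The main obstacle is precisely this last bootstrap. In contrast with the $L^2$-based theory there is no mass to stabilise the flow, and the Schrödinger group itself grows polynomially in time (cf.\ the unavoidable loss in Proposition~\ref{prop:estimativasLp}); consequently the feedback loop between $\|u\|_{\sigma+2}$ and its gradient has to be closed against an inherently growing background rather than against a bounded one. Making the time gain from $T^{1-2/q}$ dominate the nonlinear loss $\|u\|_{\sigma+2}^{\sigma}$ on each step, and checking that the induced growth rate is integrable against the local existence time, is the delicate bookkeeping I expect to carry the proof; I would anticipate that $2\sigma^2+(d+2)\sigma\le 4$ emerges as exactly the subcriticality threshold for which this balance holds.
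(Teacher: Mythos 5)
There is a genuine gap, and it appears before the bootstrap you postpone. Your reading of what the conserved energy controls is incorrect: for $\lambda<0$ the energy is
\begin{equation*}
E(u)=\frac{1}{2}\|\nabla u\|_2^2-\frac{\lambda}{\sigma+2}\|u\|_{\sigma+2}^{\sigma+2}
=\frac{1}{2}\|\nabla u\|_2^2+\frac{|\lambda|}{\sigma+2}\|u\|_{\sigma+2}^{\sigma+2},
\end{equation*}
a sum of two nonnegative terms, so its conservation bounds \emph{both} components of $\|u(t)\|_{X_{\sigma+2}}$ uniformly on $[0,T^*(u_0))$. Hence, read literally (defocusing sign), the proposition follows at once from the conservation-of-energy proposition together with the blow-up alternative --- no differential inequality for $\|u\|_{\sigma+2}$, no Strichartz bootstrap, and no use of the hypothesis $2\sigma^2+(d+2)\sigma\le4$; indeed the preceding proposition of the paper already asserts $T^*(u_0)=\infty$ when $\lambda<0$. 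The ``remaining and genuine difficulty'' you identify does not exist in this regime, and the fact that your plan needs a structural hypothesis that a two-line argument does not use should have been a red flag. What that hypothesis is really for is the \emph{focusing} regime: the paper's proof works with the identity $\frac{1}{2}\|\nabla u(t)\|_2^2=E(u_0)+\frac{1}{\sigma+2}\|u(t)\|_{\sigma+2}^{\sigma+2}$, i.e. $\lambda>0$ (the ``$\lambda<0$'' in the statement is a sign slip, as the subsequent remark on ``the focusing $L^2$-subcritical regime'' confirms), and invokes the blow-up dichotomy \eqref{limites}, which is stated only for $\lambda>0$. In that regime your opening step --- the uniform bound on $\|\nabla u(t)\|_2$ obtained ``for free from the defocusing sign'' --- is unavailable, and everything downstream of it collapses.

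Moreover, even granting your setup, the proposal never establishes its central claim: the short-interval recursion and the assertion that $2\sigma^2+(d+2)\sigma\le4$ is ``exactly the subcriticality threshold'' are anticipated rather than proven, and that is precisely the step a proof must supply. The paper closes the argument with an idea absent from your proposal: a \emph{corrected mass}. Since $u_0\notin L^2(\real^d)$, one monitors $M(t)=\|u(t)-S(t)u_0\|_2^2$; differentiating and taking imaginary parts, the term $\parteim\int|u|^{\sigma+2}$ drops and only the cross term with $S(t)u_0$ survives, giving $M(t)\lesssim\int_0^t\|u(s)\|_{\sigma+2}^{\sigma+1}\,ds$ on bounded time intervals. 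Then Gagliardo--Nirenberg applied to the difference $u-S(t)u_0$ (not to $u$ itself), combined with the focusing energy identity, yields
\begin{equation*}
\|\nabla u(t)\|_2^{\frac{4-d\sigma}{2}}\lesssim 1+\Big(\int_0^t\|\nabla u(s)\|_2^{\frac{2\sigma+2}{\sigma+2}}\,ds\Big)^{\frac{4-(d-2)\sigma}{4}},
\end{equation*}
and the hypothesis on $\sigma$ is exactly the exponent inequality $\frac{2\sigma+2}{\sigma+2}\le\frac{2(4-d\sigma)}{4-(d-2)\sigma}$ that turns this into a Gronwall inequality for $g(t)=\|\nabla u(t)\|_2^{(2\sigma+2)/(\sigma+2)}$, contradicting gradient blow-up. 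None of these ingredients --- the corrected mass, Gagliardo--Nirenberg on $u-S(t)u_0$, or the exponent bookkeeping that actually produces the hypothesis --- appears in your proposal, and your Strichartz-with-one-derivative feedback loop is not a substitute, because in the focusing case it has no uniform gradient bound to feed on.
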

\begin{nota}
Notice that the condition on $\sigma$ implies that $\sigma<\min\{\sqrt{2}, 4/(d+2)\}<4/d$. 
\end{nota}
\begin{proof}
By contradiction, assume that $u$ blows-up at time $t=T$. The previous proposition then implies that
$$
\lim_{t\to T} \|\nabla u(t)\|_2 =\infty.
$$
The first step is to obtain a corrected mass conservation estimate: indeed, by direct integration of the equation,
\begin{align*}
\frac{1}{2}\frac{d}{dt} \|u(t)-S(t)u_0\|_2^2 &= \parteim \int |u(t)|^\sigma u(t) \overline{(u(t)-S(t)u_0)} = - \parteim \int |u(t)|^\sigma u(t) \overline{S(t)u_0} \\&\lesssim \|u(t)\|_{\sigma+2}^{\sigma+1}\|S(t)u_0\|_{\sigma+2}.
\end{align*}
Integrating on $(0,t)$,
\begin{align*}
\|u(t)-S(t)u_0\|_2^2 &\lesssim \int_0^t \|u(s)\|_{\sigma+2}^{\sigma+1}\|S(s)u_0\|_{\sigma+2} ds \lesssim \|S(\cdot)u_0\|_{L^\infty((0,T), L^{\sigma+2})} \int_0^t \|u(s)\|_{\sigma+2}^{\sigma+1} ds \\&\lesssim \int_0^t \|u(s)\|_{\sigma+2}^{\sigma+1} ds
\end{align*}
All of these formal computations can be justified by a suitable regularization and approximation argument. The next step is to use the conservation of energy and the Gagliardo-Nirenberg inequality to obtain a bound on $\|\nabla u(t)\|_2$.
\begin{align*}
\frac{1}{2}\|\nabla u(t)\|_2^2 &= E(u_0) + \frac{1}{\sigma+2}\|u(t)\|_{\sigma+2}^{\sigma+2}  \\&\lesssim 1 + \|u(t)-S(t)u_0\|_{\sigma+2}^{\sigma+2} + \|S(t)u_0\|_{\sigma+2}^{\sigma+2} \\&\lesssim 1+ \|\nabla(u(t)- S(t)u_0)\|_2^{\frac{d\sigma}{2}} \|u(t)-S(t)u_0\|_2^{\frac{4-(d-2)\sigma}{2}}\\& \lesssim  1+ \|\nabla(u(t)- S(t)u_0)\|_2^{\frac{d\sigma}{2}} \left(\int_0^t \|u(s)\|_{\sigma+2}^{\sigma+1}ds\right)^{\frac{4-(d-2)\sigma}{4}}
\end{align*}
For $t$ close to $T$, $\|\nabla(u(t)- S(t)u_0)\|_2 \sim \|\nabla u(t)\|_2$ and, by conservation of energy, 
$$
\|u(t)\|_{\sigma+2}^{\sigma+1}\lesssim \|\nabla u(t)\|_2^{\frac{2\sigma+2}{\sigma+2}}
$$
Thus
\begin{align*}
\|\nabla u(t)\|_2^{\frac{4-d\sigma}{2}} \lesssim 1 + \left(\int_0^t \|\nabla u(s)\|_2^{\frac{2\sigma+2}{\sigma+2}}ds\right)^{\frac{4-(d-2)\sigma}{4}}.
\end{align*}
which, together with the condition on $\sigma$, implies that
$$
g(t):=\|\nabla u(t)\|_2^{\frac{2\sigma+2}{\sigma+2}}\le \|\nabla u(t)\|_2^{\frac{2(4-d\sigma)}{4-(d-2)\sigma}} \lesssim 1 + \int_0^t  \|\nabla u(s)\|_2^{\frac{2\sigma+2}{\sigma+2}}ds \lesssim 1 + \int_0^t g(s) ds.
$$
The desired contradiction now follows from a standard application of Gronwall's lemma.
\end{proof}

\begin{nota}[Global existence in the focusing $L^2$-subcritical regime]
As it is well-known, the global existence in $H^1(\real^d)$ for $\sigma<4/d$ follows easily from the conservation of mass and energy and from the Gagliardo-Nirenberg inequality. In Proposition \ref{prop:globalsubcritico}, we managed to perform a similar argument by using the corrected mass 
$$
M(t)=\|u(t)-S(t)u_0\|_2^2.
$$
However, the range of exponents for which the result is valid still leaves much to be desired. We are left with some questions: Is there another choice for "corrected mass" that allows a larger range of exponents? Is it possible that the large tails of the initial data contribute to blow-up behaviour?
\end{nota}

\begin{nota}[Blow-up in the $L^2$-(super)critical regime]
One may ask whether the known blow-up results for $\sigma\ge 4/d$ can be extended to initial data in $X_p(\real^d)$ which do not lie in $L^2(\real^d)$. First of all, notice that
$$
X_p(\real^d)\cap L^2(|x|^2dx) \hookrightarrow L^2(\real^d).
$$
Thus, in order to obtain blow-up outside $L^2$, one must first show blow-up in $H^1$ without the finite variance assumption. This is an open problem, which has been solved in \cite{ogawatsutsumi} under radial hypothesis and relying heavily on the conservation of mass (which is unavailable on $X_p(\real^d)$). For the nonradial case, recent works (see, for example, \cite{roudenko}) only manage to prove unboundedness of solutions of negative energy. The problem of blow-up solutions strictly in $X_p(\real^d)$ is an even harder problem, requiring a better control on the tails of the solution.
\end{nota}

\begin{nota}[Scaling invariance]
It is useful to understand how scalings affect the $X_p(\real^d)$ norm: recalling that the (NLS) is invariant under the scaling $u_\lambda(t,x)=\lambda^{2/\sigma}u(\lambda^2 t, \lambda x)$, we have
$$
\|u_\lambda (t)\|_{p} = \lambda^{\frac{2}{\sigma}-\frac{d}{p}} \|u(\lambda^2 t)\|_p, \quad \|\nabla u_\lambda(t)\|_2 = \lambda^{\frac{2}{\sigma}+1-\frac{d}{2}}\|\nabla u(\lambda^2 t)\|_2.
$$
Thus the (NLS) is $X_p(\real^d)$-subcritical for $\sigma<2p/d$. In this situation,  global existence for small data is equivalent to global existence for any data. Recall, however, that, for $\sigma\ge 4/d$, existence of blow-up phenomena is known for special initial data in $H^1(\real^d)\hookrightarrow X_p(\real^d)$. Therefore, it is impossible to obtain a global existence result for small data for $4/d\le \sigma < 2p/d$. Notice that in the energy case $p=\sigma+2$, one has $\sigma < 2p/d$ for any $\sigma+2<2^*$.
\end{nota}

\begin{nota}[Global existence for small data]
The main obstacle in proving global existence for small data turns out to be the linear part of the Duhamel formula $S(t)u_0$, since there isn't, to our knowledge, a way to bound uniformly this term over $X_p(\real^d)$. The other possibility is to leave the linear term with a space-time norm: indeed, for some powers $\sigma>2/d$, it is well-known that, if $u_0\in H^1(\real^d)$ is such that
$$
\|S(\cdot)u_0\|_{L^a((0,\infty), L^{\sigma+2}(\real^d))}\mbox{ is small}, \quad a=\frac{2\sigma(\sigma+2)}{4-\sigma(d-2)},
$$
then the corresponding solution of (NLS) is globally defined (see \cite{cazenaveweissler}). It is not hard to check that the result can be extended to $u_0\in X_{\sigma+2}(\real^d)$.
\end{nota}

\section{Local well-posedness for $p>2\sigma+2$}

As it was observed in Remark \ref{nota:restricaop}, the condition $p\le 2\sigma+2$ was necessary in order to use Strichartz estimates with no loss in regularity. For $p>2\sigma+2$, in order to estimate $L^\infty_tL^p_x$, one must turn to estimate \eqref{eq:naohomogenea}, which has a loss of one derivative. Therefore the distance one defines for the fixed-point argument must include norms with derivatives. This implies the need of a local Lipschitz condition 
$$
||u|^\sigma \nabla u - |v|^\sigma \nabla v|\lesssim C(|u|, |v|, |\nabla u|, |\nabla v|)\left(|u-v| + |\nabla (u-v)|\right),
$$
which we can only accomplish for $\sigma \ge 1$.

Because of the restriction $\sigma\ge 1$, one must have $4<p<2^*$, which excludes any dimension greater than three. For $d=3$, it turns out that no range of $p>2\sigma+2$ can be considered. Indeed, if one uses \eqref{eq:naohomogenea} with $f=|u|^\sigma u$,
$$
\left\| \int_0^\cdot S(\cdot-s)|u(s)|^\sigma u(s)ds\right\|_{L^\infty((0,T), L^p)}\lesssim \|u\|_{L^{2\sigma+2}((0,T), L^{p(\sigma+1)})}^{\sigma+1} + \|\nabla (|u|^\sigma u)\|_{L^{q'}((0,T), L^{r'})}.
$$ 
We focus on the first norm on the right hand side. To control such a term, either $X_p\hookrightarrow L^{p(\sigma+1)}_x$ and
$$
\|u\|_{L^{2\sigma+2}((0,T), L^{p(\sigma+1)}} \lesssim T^{\frac{1}{2\sigma+2}}\|u\|_{L^\infty((0,T), X_p)},
$$
or, setting $r\ge 2$ so that
$$
1-\frac{3}{r} = -\frac{3}{p(\sigma+1)},
$$
one estimates
$$
\|u\|_{L^{2\sigma+2}((0,T), L^{p(\sigma+1)})} \lesssim \| \nabla u\|_{L^{2\sigma+2}((0,T), L^r)} \lesssim T^{\frac{1}{2\sigma+2}-\frac{1}{q}}\| \nabla u\|_{L^{q}((0,T), L^r)}.
$$
In the first case, one needs $8<p(\sigma+1)<2^*=6$. In the second, one must impose $2\sigma+2<q$. A simple computation yields $p(3\sigma+1)<6$, which is again impossible, since $p(3\sigma+1)>16$.


\begin{teo}[Local well-posedness on $X_p(\real^d)$ for $d=1,2$]
Given $u_0\in X_p(\real^d)$, there exists $T=T(\|u_0\|_{X_p})>0$ and an unique solution $$u\in C([0,T], X_p(\real^d))$$
of (NLS) with initial data $u_0$. The solution depends continuously on the initial data and may be extended uniquely to a maximal interval $[0,T^*(u_0))$. If $T^*(u_0)<\infty$, then
$$
\lim_{t\to T^*(u_0)} \|u(t)\|_{X_p} = +\infty.
$$
\end{teo}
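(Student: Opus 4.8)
The plan is to run a Banach contraction argument on the Duhamel operator
$$
(\Phi u)(t)=S(t)u_0+i\lambda\int_0^t S(t-s)|u(s)|^\sigma u(s)\,ds,
$$
but, as anticipated in Remark \ref{nota:restricaop} and in the discussion opening this section, the only bound available for $\|\Phi u\|_{L^\infty((0,T),L^p)}$ when $p>2\sigma+2$ is the non-homogeneous estimate \eqref{eq:naohomogenea}, which costs one spatial derivative. Consequently, in sharp contrast with Theorem \ref{teo:lwp} (where contraction was run in a derivative-free metric and completeness rested on a lower-semicontinuity device), the working space and the contraction metric must both control a full derivative. The assumption $\sigma\ge1$ is exactly what makes the estimate for the difference of the nonlinear gradients feasible, so that the working set can be taken as a closed ball in a Banach space and is therefore automatically complete. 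Concretely, I would fix an admissible pair $(q,r)$ with $r\in(2,\min\{2^*,2p/(p-\sigma)\})$ (so that the Hölder exponents below are legitimate and $q$ is finite, yielding a time gain) and set
$$
\mathcal{E}=\Big\{u:\ \|u\|_{L^\infty((0,T),X_p)}+\|\nabla u\|_{L^q((0,T),L^r)}\le M\Big\},
$$
the closed ball of radius $M$ in the corresponding Banach space. The confinement to $d\in\{1,2\}$ enters through the fact, established in the preceding paragraphs, that the cases $d\ge3$ are excluded, together with the embedding $X_p\hookrightarrow L^m$ for every $m\in[p,\infty)$, which is the feature of low dimension we exploit repeatedly.

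For the self-mapping property I would split $\Phi u$ into its gradient and its $L^\infty_tL^p_x$ parts. The gradient $\nabla\Phi u=S(\cdot)\nabla u_0+i\lambda\int_0^\cdot S(\cdot-s)\nabla(|u|^\sigma u)\,ds$ is handled by the ordinary Strichartz estimates \eqref{eq:strichartzhomogeneo}--\eqref{eq:strichartznaohomogeneo} at the $L^2$-gradient level, using $|\nabla(|u|^\sigma u)|\lesssim|u|^\sigma|\nabla u|$ and the Hölder split $\||u|^\sigma\nabla u\|_{r'}\le\|u\|_m^\sigma\|\nabla u\|_r$ with $\sigma/m=1-2/r$; the embedding $X_p\hookrightarrow L^m$ bounds $\|u\|_m$ by $\|u\|_{X_p}$ and a Hölder step in time produces a factor $T^{1/q'-1/q}$. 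The $L^\infty_tL^p_x$ piece is controlled by \eqref{eq:naohomogenea}, whose first term is $\||u|^\sigma u\|_{L^2_tL^p_x}=\|u\|_{L^{2\sigma+2}_tL^{p(\sigma+1)}_x}^{\sigma+1}\lesssim T^{1/(2\sigma+2)}\|u\|_{L^\infty_tX_p}^{\sigma+1}$ via $X_p\hookrightarrow L^{p(\sigma+1)}$, while its second term reduces to the same $\||u|^\sigma\nabla u\|_{L^{q'}_tL^{r'}_x}$ estimate. Since the $L^\infty_tL^p_x$ norm of the linear part $S(\cdot)u_0$ stays comparable to $\|u_0\|_p$ on bounded intervals by \eqref{eq:linear}, choosing $M\sim\|u_0\|_{X_p}$ and $T$ small gives $\Phi:\mathcal{E}\to\mathcal{E}$.

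The decisive step, which I expect to be the main obstacle, is the contraction estimate, since the derivative loss forces a bound on $\nabla(|u|^\sigma u)-\nabla(|v|^\sigma v)$ in terms of $u-v$ and $\nabla(u-v)$. Writing
$$
|u|^\sigma\nabla u-|v|^\sigma\nabla v=|u|^\sigma\nabla(u-v)+(|u|^\sigma-|v|^\sigma)\nabla v,
$$
the first summand is controlled by $|u|^\sigma|\nabla(u-v)|$, whereas the second needs the Lipschitz bound $\bigl||u|^\sigma-|v|^\sigma\bigr|\lesssim(|u|^{\sigma-1}+|v|^{\sigma-1})|u-v|$, valid only for $\sigma\ge1$; this is the structural origin of the restriction $\sigma\ge1$ and hence of $p>4$, which, combined with the exclusion of $d\ge3$, leaves precisely $d=1,2$. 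Inserting these pointwise bounds into \eqref{eq:naohomogenea} and \eqref{eq:strichartznaohomogeneo}, placing the factors $|u|^\sigma,|u|^{\sigma-1},|v|^{\sigma-1}$ into suitable $L^m$ (bounded by $M^\sigma$ through $X_p\hookrightarrow L^m$), $\nabla(u-v)$ and $\nabla v$ into $L^r$, and $u-v$ into $L^{p(\sigma+1)}$ or $L^\infty_tL^p_x$, every term acquires a positive power of $T$. Hence $\Phi$ is a strict contraction on $\mathcal{E}$ for $T=T(\|u_0\|_{X_p})$ small.

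Banach's fixed point theorem then produces a unique fixed point $u\in\mathcal{E}$; the continuity $u\in C([0,T],X_p)$ follows from the Duhamel representation together with the strong continuity of $\{S(t)\}$ on $X_p$ proved in Section 2. Uniqueness, the extension to a maximal interval $[0,T^*(u_0))$, the blow-up alternative $\|u(t)\|_{X_p}\to\infty$ as $t\to T^*(u_0)<\infty$, and the continuous dependence on $u_0$ are then established exactly as in the $H^1$ theory and as in the proof of Theorem \ref{teo:lwp} (cf. \cite[Theorem 4.4.1]{cazenave}).
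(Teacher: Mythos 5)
Your proposal is correct, and its core strategy is the same as the paper's: a Banach fixed point for the Duhamel operator in which the $L^\infty_t L^p_x$ component is controlled by the loss-of-derivative estimate \eqref{eq:naohomogenea}, the gradient component by ordinary Strichartz estimates, and the difference of nonlinear gradients by the pointwise Lipschitz bound that forces $\sigma\ge 1$ (hence, with the exclusion of $d\ge 3$, the restriction $d=1,2$), all glued by the embeddings $X_p(\real^d)\hookrightarrow L^m(\real^d)$, $m\ge p$, valid in low dimension. The one structural difference is the functional setting. The paper takes $\mathcal{E}$ to be simply the ball in $L^\infty((0,T),X_p)$ with distance $\|u-v\|_{L^\infty((0,T),X_p)}$, and never introduces an auxiliary space-time gradient norm: since the metric already contains the $\dot{H}^1$ part of $X_p$, the gradient factors $\nabla v$ and $\nabla(u-v)$ are placed in $L^\infty_t L^2_x$, while the companion factors go into $L^{\frac{2\sigma\rho'}{2-\rho'}}_x$ for an admissible pair $(\gamma,\rho)$ with $\rho$ close to $2$ — an exponent which is then very large and hence reachable from $X_p$ when $d\le 2$; this yields clean factors $T^{1/2}$ and $T^{1/\gamma'}$ (see \eqref{eq:lwp21} and \eqref{eq:lwp22}). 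Your extra norm $\|\nabla u\|_{L^q_t L^r_x}$ in the definition of $\mathcal{E}$ and of the metric is therefore redundant: it is harmless (your Hölder bookkeeping is consistent, e.g. $m=\sigma r/(r-2)>p$ exactly when $r<2p/(p-\sigma)$), but it obliges you to verify one more self-mapping and contraction estimate than the paper needs, and it buys nothing here because no term ever requires gradient integrability above $L^2_x$. Two minor slips, neither fatal: the time factor for $\||u|^\sigma u\|_{L^2_t L^p_x}$ should be $T^{1/2}$ rather than $T^{1/(2\sigma+2)}$ (Hölder in time is applied before raising to the power $\sigma+1$; your stated bound is still true for $T\le 1$), and in the final continuous-dependence claim it is worth noting, as the paper does in \eqref{eq:contracao}, that the same contraction estimate applied to two solutions with different data directly gives $d(u,v)\lesssim\|u_0-v_0\|_{X_p}$ for small $T$, rather than deferring entirely to the $H^1$ theory.
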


\begin{proof}
Consider the space
\begin{align*}
\mathcal{E}=\Big\{ u\in L^\infty((0,T), X_p): \vvvert u\vvvert:= \|u\|_{L^\infty((0,T), X_p)} \le M \Big\}.
\end{align*}
endowed with the natural distance
$$
d(u,v)= \vvvert u-v\vvvert.
$$
The space $(\mathcal{E}, d)$ is clearly a complete metric space. If $u,v\in \mathcal{E}$, then
\begin{align*}
\||u|^\sigma u - |v|^\sigma v\|_{L^2((0,t), L^p)}^2\lesssim \int_0^t \left(\|u\|_{p(\sigma+1)}^{2\sigma} + \|v\|_{p(\sigma+1)}^{2\sigma}\right) \|u-v\|_{p(\sigma+1)}^2 ds
\end{align*}
Since $X_p(\real^d)\hookrightarrow L^{p(\sigma+1)}(\real^d)$,
\begin{equation}\label{eq:lwp21}
\||u|^\sigma u - |v|^\sigma v\|_{L^2((0,t), L^p)}^2\lesssim  T\left(\|u\|_{L^\infty((0,t). X_p)}^{2\sigma} + \|v\|_{L^\infty((0,t). L^p)}^{2\sigma}\right) \|u-v\|_{L^\infty((0,t). X_p)}^{2}.
\end{equation}
Choose an admissible pair $(\gamma,\rho)$ with $\rho$ sufficiently close to 2. We have
\begin{align*}
\|\nabla \left(|u|^\sigma u - |v|^\sigma v\right)\|_{L^{\gamma'}((0,T), L^{\rho'})} &\lesssim \left\|\left(|u|^{\sigma-1} + |v|^{\sigma-1}\right)(|u-v||\nabla v| + |v||\nabla (u-v)|) \right\|_{L^{\gamma'}((0,T), L^{\rho'})}.
\end{align*}
As an example, we treat the term $|u|^{\sigma-1}|u-v||\nabla v|$:
\begin{align*}
\||u|^{\sigma-1}|u-v||\nabla v|\|_{\rho'}&\lesssim \|u\|_{\frac{2\sigma \rho'}{2-\rho'}}^{\sigma-1} \|u-v\|_{\frac{2\sigma \rho'}{2-\rho'}}\|\nabla v\|_{2}\\&\lesssim \|u\|_{X_p}^\sigma \|u-v\|_{X_p},
\end{align*}
Therefore
\begin{align}
\|\nabla \left(|u|^\sigma u - |v|^\sigma v\right)\|_{L^{\gamma'}((0,T), L^{\rho'})}&\lesssim
T^{\frac{1}{\gamma'}} \left(\|u\|_{L^\infty((0,T), X_p)}^\sigma + \|v\|_{L^\infty((0,T), X_p)}^\sigma\right)\|u-v\|_{L^\infty((0,T), X_p)} \nonumber\\&\lesssim T^{\frac{1}{\gamma'}} M^\sigma d(u,v).\label{eq:lwp22}
\end{align}
For $u\in\mathcal{E}$, define
$$
\Phi(u)(t)=S(t)u_0 +i\lambda \int_0^t S(t-s)|u(s)|^\sigma u(s)ds, \quad 0\le t\le T.
$$
The estimates \eqref{eq:lwp21} and \eqref{eq:lwp22}, together with \eqref{eq:naohomogenea} and Strichartz's estimates then imply that
\begin{align*}
\vvvert \Phi(u)\vvvert \lesssim&  \|u_0\|_{X_p}  + \left\|\int_0^\cdot S(\cdot-s)|u(s)|^\sigma u(s)ds\right\|_{L^\infty((0,T), L^p)} + \left\|\int_0^\cdot S(\cdot-s)|u(s)|^\sigma u(s)ds\right\|_{L^\infty((0,T), \dot{H}^1)}\\&\lesssim \|u_0\|_{X_p} + \left(\||u|^\sigma u\|_{L^2((0,T); L^p)} + \|\nabla (|u|^\sigma u)\|_{L^{\gamma'}((0,T); L^{\rho'})}\right)\\&\lesssim \|u_0\|_{X_p} + \left(T^{\frac{1}{2}} + T^{\frac{1}{\gamma'}}\right) M^{\sigma+1}
\end{align*}
and
\begin{align}
d(\Phi(u),\Phi(v)) &\lesssim \left(\||u|^\sigma u - |v|^\sigma v\|_{L^2((0,T); L^p)} + \|\nabla (|u|^\sigma u)- \nabla( |v|^\sigma v)\|_{L^{\gamma'}((0,T); L^{\rho'})}\right)\nonumber\\ &\lesssim \left(T^{\frac{1}{2}} + T^{\frac{1}{\gamma'}}\right)M^{\sigma}d(u,v).\label{eq:contracao}
\end{align}
Choosing $M\sim 2\|u_0\|_{X_p}$, for $T=T(\|u_0\|_{X_p})$ small enough, it follows that $\Phi:\mathcal{E}\mapsto \mathcal{E}$ is a strict contraction. Banach's fixed point theorem now implies that $\Phi$ has a unique fixed point over $\mathcal{E}$, which is the unique solution $u$ of (NLS) with initial data $u_0$ on the interval $(0,T)$. This solution may then be extended uniquely to a maximal interval of existence $(0,T(u_0))$. The blow-up alternative follows by a standard continuation argument. Finally, if $u,v$ are two solutions with initial data $u_0, v_0\in X_p(\real^d)$, as in \eqref{eq:contracao}, one has
\begin{align*}
d(u, v) = d(\Phi(u), \Phi(v))&\lesssim \|u_0-v_0\|_{X_p} + \left(T^{\frac{1}{2}} + T^{\frac{1}{\gamma'}}\right)M^{\sigma}d(u,v) \\&\lesssim \|u_0-v_0\|_{X_p} + \left(T^{\frac{1}{2}} + T^{\frac{1}{\gamma'}}\right)\left(\max\{\|u_0\|_{X_p}, \|v_0\|_{X_p}\}\right)^{\sigma}d(u,v) 
\end{align*}
Thus, for $T_0=T_0(\|u_0\|_{X_p}, \|v_0\|_{X_p})$ small,
$$
d(u,v)\lesssim  \|u_0-v_0\|_{X_p},
$$
and continuous dependence follows.
\end{proof}

\begin{prop}[Persistence of integrability]
Fix $d=1,2$ and $p>\tilde{p}$. Given $u_0\in X_{\tilde{p}}(\real^d)$, consider the $X_p(\real^d)$-solution $u\in C([0,T^*(u_0)),X_{p})$ of (NLS) with initial data $u_0$. Then $u\in C([0,T^*(u_0)),X_{\tilde{p}})$.
\end{prop}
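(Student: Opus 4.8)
The plan is to follow the strategy of Proposition \ref{prop:persist1}, reducing the statement to an a priori bound that rules out blow-up in $X_{\tilde p}(\real^d)$ before time $T^*(u_0)$. The local well-posedness theorem over $X_{\tilde p}(\real^d)$ together with uniqueness over $X_p(\real^d)$ first produces some $T_0>0$ with $u\in C([0,T_0],X_{\tilde p})$, and by the blow-up alternative for the $X_{\tilde p}$-solution it then suffices to show that, for every fixed $T<T^*(u_0)$, the quantity $\|u\|_{L^\infty((0,T),L^{\tilde p})}$ stays finite. The key structural remark is that $X_{\tilde p}(\real^d)$ and $X_p(\real^d)$ share the same homogeneous component $\dot H^1(\real^d)$: since $u$ is the $X_p$-solution, $K:=\|u\|_{L^\infty((0,T),X_p)}<\infty$ already controls $\|\nabla u\|_{L^\infty((0,T),L^2)}$, so only the low-integrability norm $\|u\|_{L^{\tilde p}}$ remains to be estimated. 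Unlike the regime $p\le 2\sigma+2$, no nonlinear smoothing is available, so I cannot simply place $u-S(\cdot)u_0$ in $H^1$; the $L^{\tilde p}$ norm must be tracked by hand.

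First I would apply the non-homogeneous estimate \eqref{eq:naohomogenea}, with $\tilde p$ in place of $p$, to the Duhamel term of $u$. The free term is harmless: since the Schrödinger group is well-defined on $X_{\tilde p}(\real^d)$ and $u_0\in X_{\tilde p}$, one has $\|S(\cdot)u_0\|_{L^\infty((0,T),L^{\tilde p})}\lesssim C(T)\|u_0\|_{X_{\tilde p}}$. For the derivative term I would use $|\nabla(|u|^\sigma u)|\lesssim |u|^\sigma|\nabla u|$ and Hölder, with an admissible pair $(q,r)$, $r$ close to $2$, to reach $\|\nabla(|u|^\sigma u)\|_{L^{r'}}\lesssim \|u\|_{L^{a}}^\sigma\|\nabla u\|_2$ for a large finite exponent $a$. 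Because $d=1,2$, the Gagliardo--Nirenberg embedding $X_p(\real^d)\hookrightarrow L^{a}(\real^d)$ holds for every $a\ge p$, so this term is bounded purely by $K$, giving $\|\nabla(|u|^\sigma u)\|_{L^{q'}((0,T),L^{r'})}\lesssim C(T,K)$. Thus this part contributes only a constant and does not feed back into $\|u\|_{L^{\tilde p}}$.

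The remaining term is $\||u|^\sigma u\|_{L^2((0,t),L^{\tilde p})}$, where $\||u|^\sigma u\|_{L^{\tilde p}}=\|u\|_{L^{\tilde p(\sigma+1)}}^{\sigma+1}$. If $\tilde p(\sigma+1)\ge p$, then $X_p\hookrightarrow L^{\tilde p(\sigma+1)}$ bounds this by $K^{\sigma+1}$ and the proof closes at once. The delicate case is $\tilde p(\sigma+1)<p$: here I would interpolate $\|u\|_{L^{\tilde p(\sigma+1)}}\le\|u\|_{L^{\tilde p}}^{1-\theta}\|u\|_{L^p}^{\theta}$, which introduces the unknown $\|u\|_{L^{\tilde p}}$ with exponent $(1-\theta)(\sigma+1)$. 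Writing $h(t):=\|u\|_{L^\infty((0,t),L^{\tilde p})}$, and absorbing the $\|u\|_{L^p}\le K$ factors, the estimates of the preceding paragraphs combine into an integral inequality of the form $h(t)\lesssim C(T,K)+C(T,K)\big(\int_0^t h(s)^{2(1-\theta)(\sigma+1)}\,ds\big)^{1/2}$.

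The main obstacle, and the crux of the argument, is to check that this integral inequality does not itself permit finite-time blow-up. A direct computation of the interpolation exponent yields $(1-\theta)(\sigma+1)=\dfrac{p-(\sigma+1)\tilde p}{p-\tilde p}$, which is strictly less than $1$ for every $\sigma>0$ (and nonnegative precisely because we are in the case $\tilde p(\sigma+1)\le p$). Hence the power of $h$ on the right-hand side is subcritical: since $h$ is nondecreasing, $\int_0^t h(s)^{2\alpha}\,ds\le t\,h(t)^{2\alpha}$ with $\alpha<1$, the inequality $h(t)\lesssim C(T,K)+C(T,K)\,t^{1/2}h(t)^{\alpha}$ is self-improving and forces an a priori bound on $h$ over $[0,T]$ depending only on $T$, $K$ and $\|u_0\|_{X_{\tilde p}}$. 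Together with $\|\nabla u\|_{L^\infty((0,T),L^2)}\le K$, this shows $\|u\|_{L^\infty((0,T),X_{\tilde p})}<\infty$, contradicting blow-up and forcing the maximal $X_{\tilde p}$-existence time to be at least $T^*(u_0)$, which is the claim.
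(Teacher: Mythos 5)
Your proposal is correct, and it follows the paper's skeleton — apply \eqref{eq:naohomogenea} to the Duhamel term, kill the derivative term via $\|\nabla(|u|^\sigma u)\|_{L^{\rho'}}\lesssim \|u\|_{L^a}^\sigma\|\nabla u\|_2$ with $\rho$ close to $2$ and $X_p\hookrightarrow L^a$, and then control $\||u|^\sigma u\|_{L^2L^{\tilde p}}=\|u\|_{L^{2\sigma+2}L^{\tilde p(\sigma+1)}}^{\sigma+1}$ — but it diverges from the paper at exactly the delicate point, and for the better. The paper disposes of the last term by invoking the injection $X_p(\real^d)\hookrightarrow L^{\tilde p(\sigma+1)}(\real^d)$; since in $d=1,2$ the Gagliardo--Nirenberg inequality only interpolates \emph{upward} from $L^p$ (one can reach $L^a$ only for $a\ge p$), that injection is legitimate precisely when $\tilde p(\sigma+1)\ge p$, and the hypothesis $p>\tilde p$ alone does not guarantee this (e.g.\ $\sigma=1$, $\tilde p=5$, $p=100$). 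Your split into the two cases is therefore not pedantry: in the case $\tilde p(\sigma+1)\ge p$ you reproduce the paper's argument, while in the complementary case $\tilde p(\sigma+1)<p$ — which the paper's proof does not actually cover — you interpolate $\|u\|_{\tilde p(\sigma+1)}\le \|u\|_{\tilde p}^{1-\theta}\|u\|_p^{\theta}$, feed the unknown $h(t)=\|u\|_{L^\infty((0,t),L^{\tilde p})}$ back into the estimate, and close via the self-improving inequality $h(t)\lesssim C+CT^{1/2}h(t)^{\alpha}$ with
\begin{equation}
\alpha=(1-\theta)(\sigma+1)=\frac{p-(\sigma+1)\tilde p}{p-\tilde p}<1,
\end{equation}
which yields a uniform bound by Young's inequality (no genuine Gronwall iteration is even needed, since $h$ is monotone and $\alpha$ is subcritical), and hence contradicts $X_{\tilde p}$-blow-up before $T^*(u_0)$. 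So your argument buys strictly more than the paper's: it proves the proposition as stated, whereas the printed proof is complete only under the unstated restriction $p\le\tilde p(\sigma+1)$. The one point to make explicit when writing this up is that the inequality for $h$ is first derived on the maximal $X_{\tilde p}$-existence interval intersected with $[0,T]$, where $h$ is finite and continuous, and the uniform bound then contradicts the blow-up alternative there; your closing sentence says this, and it is the right way to phrase it.
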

\begin{proof}
As in the proof of Proposition \ref{prop:persist1}, given $T<T^*(u_0)$, one must prove that the $L^{\tilde{p}}$ norm of $u$ is bounded over $(0,T)$. Applying \eqref{eq:naohomogenea} to the Duhamel formula of $u$,
\begin{align*}
\|u\|_{L^\infty((0,T), L^{\tilde{p}})} \lesssim \|u_0\|_{X_{\tilde{p}}} + \||u|^\sigma u\|_{L^2((0,T), L^{\tilde{p}})} + \||u|^\sigma |\nabla u|\|_{L^{\gamma'}((0,T), L^{\rho'})},
\end{align*}
for any admissible pair $(\gamma,\rho)$. The penultimate term is treated using the injection $X_p(\real^d)\hookrightarrow L^{\tilde{p}(\sigma+1)}$:
$$
\||u|^\sigma u\|_{L^2((0,T), L^{\tilde{p}})} = \|u\|_{L^{2\sigma+2}((0,T), L^{\tilde{p}(\sigma+1)})}^{\sigma+1} \lesssim T^{\frac{1}{2}}\|u\|_{L^\infty((0,T), X_p(\real^d)}^{\sigma+1}<\infty.
$$
Choose $\rho$ sufficiently close to 2 so that $X_p(\real^d)\hookrightarrow L^{\frac{2\sigma \rho'}{2-\rho'}}(\real^d)$. Then
$$
\||u|^\sigma |\nabla u|\|_{L^{\gamma'}((0,T), L^{\rho'})} \lesssim \left\|\|u\|_{\frac{2\sigma\rho'}{2-\rho'}}^\sigma\|\nabla u\|_2 \right\|_{L^{\gamma'}(0,T)}\lesssim T^{\frac{1}{\gamma'}} \|u\|_{L^\infty((0,T), X_p)}^{\sigma+1}<\infty.
$$
Therefore $\|u\|_{L^\infty((0,T), L^{\tilde{p}})}$ is finite and the proof is finished.
\end{proof}

\section{Further comments}

In light of the results we have proven, we highlight some new questions that have risen:
\begin{enumerate}
\item Local well-posedness: In dimensions $d\ge 3$, the local well-posedness in the case $p>2\sigma+2$ remains open. Is this optimal? As we have arqued in Remark \ref{nota:restricaop}, this case requires new estimates for the Schrödinger group.
\item Global well-posedness: this problem is completely open for $p>\sigma+2$. Even if the energy is well-defined, there are still several cases where global well-posedness (even for small data) remains unanswered.
\item New blow-up behaviour: in the opposite perspective, is it possible to exhibit new blow-up phenomena? This would be especially interesting either for the defocusing case or for the $L^2$-subcritical case, where blow-up behaviour in $H^1$ is impossible. 
\item Stability of ground-states: in the $H^1$ framework, the work of \cite{cazenavelions} has shown that the ground-states are orbitally stable under $H^1$ perturbations. Does the result still hold if we consider $X_p$ perturbations?
\end{enumerate}

\section{Acknowledgements}
The author was partially suported by Fundação para a Ciência e Tecnologia, through the grants UID/MAT/04561/2013 and SFRH/BD/96399/2013.

\bibliography{Biblioteca}
\bibliographystyle{plain}

\small
\noindent \textsc{Sim\~ao Correia}\\
CMAF-CIO and FCUL \\
\noindent Campo Grande, Edif\'icio C6, Piso 2, 1749-016 Lisboa (Portugal)\\
\verb"sfcorreia@fc.ul.pt"\\

\end{document}